\numberwithin{equation}{section}
\newtheorem{theorem}{Theorem}[section]
\newtheorem{lemma}[theorem]{Lemma}
\newtheorem{Proposition}[theorem]{Proposition}
\newtheorem{Conjecture}[theorem]{Conjecture}
\newtheorem{Corollary}[theorem]{Corollary}
\title[Sums of Linear Transformations]{Sums of Linear Transformations in higher dimensions}
\author[Akshat Mudgal]{Akshat Mudgal}
\subjclass[2010]{11B13, 11B30, 11P70}
\keywords{Additive combinatorics, Sum of dilates, Inverse theorem, Sum of rotations}
\date{} 
\address{ School of Mathematics, University of Bristol, University Walk, Clifton, Bristol BS8 1TW, United Kingdom}
\email{am16393@bristol.ac.uk}
\begin{document}

\maketitle
\begin{abstract}
In this paper, we prove the following two results. Let $d$ be a natural number and $q,s$ be co-prime integers such that $1 < qs$. Then there exists a constant $\delta > 0$ depending only on $q,s$ and $d$ such that for any finite subset $A$ of $\mathbb{R}^d$ that is not contained in a translate of a hyperplane, we have
\[  |q\cdot A + s\cdot A| \geq  (|q| +|s|+ 2d-2)|A|        - O_{q,s,d}(|A|^{1-\delta}) . \] The main term in this bound is sharp and improves upon an earlier result of Balog and Shakan. Secondly, let $\mathscr{L} \in \textrm{GL}_{2}( \mathbb{R})$ be a linear transformation such that $\mathscr{L}$ does not have any invariant one-dimensional subspace of $\mathbb{R}^2$. Then for all finite subsets $A$ of $\mathbb{R}^2$, we have
\[  |A + \mathscr{L}(A)| \geq  4|A| - O(|A|^{1-\delta}), \]
for some absolute constant $\delta > 0$. The main term in this result is sharp as well. 
\end{abstract}

\section{Introduction}
Let $A, B$ be finite subsets of $\mathbb{R}^d$, for some $d \in \mathbb{N}$. We define 
\[ A + B = \{ a + b \ | \ a \in A, b \in B \}. \]
Furthermore, for all real numbers $q$, and $a = (a_1, \dots, a_d) \in \mathbb{R}^d$, we define
\[ q\cdot a = (qa_1, \dots,  qa_d), \]
and for all $A \subseteq \mathbb{R}^d$, 
\[ q\cdot A = \{ q\cdot a \ | \ a \in A\}. \]
We define \emph{dimension} of a set $A \subseteq \mathbb{R}^d$ to be the dimension of the affine subspace spanned by $A$. Our first result is on sums of dilates.

\begin{theorem} \label{main}
Let $d$ be a natural number and $q,s$ be co-prime integers such that $1 < qs$. Further, let $A$ be a finite $d$-dimensional subset of $\mathbb{R}^d$. Then there exists a constant $\delta > 0$ depending only on $q,s$ and $d$ such that
\[  |q\cdot A + s\cdot A| \geq  (|q| +|s|+ 2d-2)|A|        - O_{q,s,d}(|A|^{1-\delta}) . \]
\end{theorem}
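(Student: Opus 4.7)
The natural approach is induction on the dimension $d$. For the base case $d = 1$, we invoke the sharp one-dimensional sum of dilates inequality of Cilleruelo--Hamidoune--Serra (and subsequent refinements), giving $|q \cdot A + s \cdot A| \ge (|q|+|s|)|A| - O_{q,s}(1)$ for coprime integers $q, s$ with $qs > 1$ and finite $A \subset \mathbb{R}$. For the inductive step, given $A \subset \mathbb{R}^d$ of full dimension $d \ge 2$, the plan is to choose a direction $\vec{v}$ adapted to the structure of $A$ and project along $\vec{v}$ onto the hyperplane $\vec{v}^{\perp}$, writing $\pi$ for the projection and setting $m = |\pi(A)|$.

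The argument bifurcates on whether $A$ is spread out or concentrated in direction $\vec{v}$. In the generic case, when $\vec{v}$ makes $\pi$ injective on $A$ (equivalently $m = |A|$), applying the inductive hypothesis to $\pi(A)$ together with the trivial bound $|q \cdot A + s \cdot A| \ge |q \cdot \pi(A) + s \cdot \pi(A)|$ yields only $(|q|+|s|+2d-4)|A| - O(|A|^{1-\delta'})$, which falls short of the target by $2|A|$. To recover this, the argument uses the two extreme points $p_+, p_- \in A$ in direction $\vec{v}$ (unique for generic $\vec{v}$): writing $A_0 = A \setminus \{p_+, p_-\}$, the four cap translates $q \cdot p_+ + s \cdot A_0$, $q \cdot A_0 + s \cdot p_+$, $q \cdot p_- + s \cdot A_0$, and $q \cdot A_0 + s \cdot p_-$ each contribute roughly $|A|$ elements lying strictly outside the $\vec{v}$-extent of $q \cdot A_0 + s \cdot A_0$, because $p_{\pm}$ are extremal in $\vec{v}$. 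Summing these ``above the bulk'' and ``below the bulk'' contributions (rendered essentially disjoint using $|q| \ne |s|$, which follows from $\gcd(q,s) = 1$ together with $qs > 1$) produces the required extra $2|A| - O(1)$ elements. In the concentrated case $m \le |A|^{1-\eta}$ for a small $\eta > 0$, some fiber $A_x$ has size $\gtrsim |A|^\eta$, and one applies the 1D sum of dilates fiber by fiber, noting that by coprimality of $q, s$ most pairs $(x, x')$ yield distinct values of $q \cdot x + s \cdot x'$ and therefore contribute disjointly to $q \cdot A + s \cdot A$.

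The main obstacle is obtaining the sharp constant $2d - 2$ rather than the weaker $d - 1$ obtained by Balog and Shakan. The decisive gain comes from exploiting \emph{both} extremal fibers simultaneously, effectively doubling the dimensional contribution per inductive step; the delicate point is the disjointness analysis between the four cap translates and the bulk, which becomes tightest when $|q|$ and $|s|$ are close in magnitude. A secondary technical concern is choosing the cutoff $\eta > 0$ so that the error exponent $\delta$ decreases by only a controlled factor per dimensional step, ensuring it remains positive at the end.
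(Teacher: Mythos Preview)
Your generic--case argument contains a genuine gap. You assert that each of the four cap translates $q\cdot p_{+} + s\cdot A_0$, $q\cdot A_0 + s\cdot p_{+}$, $q\cdot p_{-} + s\cdot A_0$, $q\cdot A_0 + s\cdot p_{-}$ contributes roughly $|A|$ elements outside the $\vec{v}$-extent of $q\cdot A_0 + s\cdot A_0$. This is false. Take $d=2$, $q=2$, $s=1$, and let $A_0$ be (a generic perturbation of) $\{1,\dots,N\}$ along $\vec{v}$ with $p_{+}$ having $\vec v$-coordinate $N+1$. Then $q\cdot p_{+} + s\cdot A_0$ has $\vec v$-coordinates in $\{2N+3,\dots,3N+2\}$, while $q\cdot A_0 + s\cdot A_0$ already reaches $\vec v$-coordinate $3N$; only two elements are new, not $N$. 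In general a single extremal point buys you $O_{q,s}(1)$ new elements via this cap construction, not $|A|$. Since the deficit from the projection bound is $2|A|$, two extremal points cannot close it, and the dichotomy collapses: for any set $A$ with small $|q\cdot A + s\cdot A|$ you are forced into the generic case (for a typical direction $\vec v$) and the argument gives nothing beyond $(|q|+|s|+2d-4)|A|$.

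The paper avoids this obstruction by a different route. Rather than a direct induction on $d$ with a spread/concentrated dichotomy, it first shows that small $|q\cdot A + s\cdot A|$ implies small $|A+A|$, then invokes the Green--Ruzsa Freiman theorem to conclude that $A$ is covered by $r = O_{q,s,d}(|A|^{1-\sigma})$ parallel lines. This eliminates the generic case entirely: one is always in the concentrated situation. The heart of the proof is then a proposition showing that any $d$-dimensional set on $r$ parallel lines satisfies $|q\cdot A + s\cdot A| \ge (|q|+|s|+2d-2)|A| - O_{q,s,d}(r)$, proved by a double induction on $d$ and $r$. The extra $2|A|$ per dimension comes not from two extremal points of $A$, but from peeling off one extremal \emph{line} $l_r$ (chosen via the convex hull of the line-intersections with a transverse hyperplane): one gains $(|q|+|s|)|p_r|$ from the line itself via the one-dimensional bound, and a further $(2d-2)|p_r|$ from $d-1$ pairs of cross-lines $q\cdot l_r + s\cdot m_i$, $s\cdot l_r + q\cdot m_i$ that are disjoint from $q\cdot A' + s\cdot A'$. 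Your concentrated case sketch is pointing in this direction but is far too vague; obtaining the correct constant $2d-2$ there requires exactly this convex-geometry line-peeling argument, not merely ``fiber-by-fiber'' application of the one-dimensional bound.
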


The constant $|q| + |s| + 2d-2$ in Theorem $\ref{main}$ is sharp as witnessed by the following example. Let $e_1, e_2, \dots, e_d$ be the standard basis for $\mathbb{Z}^d$. For each $N \in \mathbb{N}$, define 
\[A_N = \{e_1,e_2, \dots, e_d\} \cup \{ 2e_1, \dots, Ne_1 \}. \]
An easy computation shows that
\[ |q\cdot A_N + s\cdot A_N| \leq (|q| + |s| + 2d - 2)|A_N| - O_{q,s,d}(1). \]

In the case $d=1$, a generalization of Theorem $\ref{main}$ to sums of several dilates with a better error term was proved by Shakan \cite{Sh2016}. Furthermore, when $d \geq 2$, previously best known lower bounds for $|A+q\cdot A|$ were by Balog and Shakan \cite{BalSh15}. When $d \geq 4$ and $q \geq 2$, they showed that 
\[ |A+q\cdot A| \geq (q + d + 1)|A| - O_{q,d}(1). \]
Furthermore, in the same paper, they showed that when $d \in \{ 2,3 \}$ and $q\geq 2$, 
\[ |A+q\cdot A| \geq (q + 2d - 1)|A| - O_{q,d}(1), \]
which they conjectured to be true for all $d \in \mathbb{N}$. 

\begin{Conjecture} \label{BalShConj}
Let $d, q$ be natural numbers such that $q > 1$ and let $A \subseteq \mathbb{Z}^d$ be a finite $d$-dimensional set. Then
\[|A + q \cdot A| \geq (q + 2d - 1)|A| - O_{q,d}(1). \] 
\end{Conjecture}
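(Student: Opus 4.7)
The plan is to derive Conjecture \ref{BalShConj} from Theorem \ref{main} and then refine the error term. Setting $s = 1$ in Theorem \ref{main}, the hypotheses are satisfied for any integer $q \geq 2$, since then $\gcd(q,1) = 1$ and $qs = q > 1$. Applied to a finite $d$-dimensional $A \subseteq \mathbb{Z}^d \subseteq \mathbb{R}^d$, this yields
\[ |A + q \cdot A| = |1 \cdot A + q \cdot A| \geq (q + 1 + 2d - 2)|A| - O_{q,d}(|A|^{1-\delta}) = (q + 2d - 1)|A| - O_{q,d}(|A|^{1-\delta}), \]
which already recovers the conjectured main term $(q + 2d - 1)|A|$. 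Thus the only remaining task is to sharpen the error from $O_{q,d}(|A|^{1-\delta})$ down to $O_{q,d}(1)$.

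For this sharpening, I would run a stability/inverse argument guided by the extremal example $A_N = \{e_1, \ldots, e_d\} \cup \{2e_1, \ldots, Ne_1\}$. If $A$ is a finite $d$-dimensional set with $|A + q \cdot A| \leq (q + 2d - 1)|A| + o(|A|)$, the goal is to show that $A$ coincides, up to $O_{q,d}(1)$ outliers and an affine change of lattice basis, with a long arithmetic progression in one direction together with $d-1$ further basis vectors. The strategy is to project $A$ onto a hyperplane chosen so that a generic fibre is large: by a pigeonhole on fibres, one locates a long $1$-dimensional progression within $A$ and then removes the outliers, iterating on the residual low-dimensional set. The decomposition $(q + 2d - 1) = (q+1) + 2(d-1)$ naturally reflects this inductive picture, with the first term coming from the sharp one-dimensional sum-of-dilates inequality and the $2(d-1)$ accounting for the remaining linearly independent directions.

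The hard part will be the inverse theorem itself: showing that every near-extremal configuration must essentially coincide with $A_N$, up to translation and an affine change of lattice basis. Such a rigidity statement is already delicate in one dimension, and in higher dimensions it requires tightly controlling how equality propagates through the hyperplane projection inequality
\[ |A + q \cdot A| \geq |\pi(A) + q \cdot \pi(A)| + (\text{contribution from fibres}), \]
where $\pi$ is the chosen projection. Without such a sharpened inverse theorem, the polynomial error term of Theorem \ref{main} represents the main current progress toward the conjecture, and fully eliminating the $|A|^{1-\delta}$ loss in favour of $O_{q,d}(1)$ is, in my view, the principal remaining obstacle.
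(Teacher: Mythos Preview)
The statement you are asked to prove is labelled a \emph{Conjecture} in the paper, and the paper does not prove it. Immediately after stating it, the paper remarks only that ``Theorem~\ref{main} implies Conjecture~\ref{BalShConj} with a slightly worse error term.'' Your first paragraph reproduces exactly this observation: setting $s=1$ in Theorem~\ref{main} gives $|A + q\cdot A| \geq (q+2d-1)|A| - O_{q,d}(|A|^{1-\delta})$, which is precisely the extent of what the paper establishes toward the conjecture. On that point your proposal and the paper agree completely.

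The remainder of your proposal is not a proof but a sketch of a possible attack on the $O_{q,d}(1)$ error term, and you say as much yourself in the final paragraph. This is honest, but it means your write-up does not actually prove the conjecture any more than the paper does. The stability/inverse strategy you outline (showing near-extremal sets are affinely equivalent to $A_N$ up to $O_{q,d}(1)$ points, via fibred projections) is plausible in spirit, but you have not supplied any of the technical ingredients: no rigidity lemma, no control on how equality cases propagate through the projection inequality, and no mechanism to rule out configurations that are close to extremal without being of the $A_N$ type. As written, this portion is a research plan rather than an argument, and the conjecture remains open both in the paper and in your proposal.
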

We observe that Theorem $\ref{main}$ implies Conjecture $\ref{BalShConj}$ with a slightly worse error term. 
\par

Our second result is about sums of linear transformations in $\mathbb{R}^2$. Firstly, given $\mathscr{L} \in \textrm{GL}_{d}( \mathbb{R})$  and $A \subseteq \mathbb{R}^d$, we define
\[\mathscr{L}(A) = \{\mathscr{L}(a) \ | \ a \in A \}.  \] 
We give lower bounds for $|A + \mathscr{L}(A)|$ where $A \subseteq \mathbb{R}^2$ and $\mathscr{L} \in \textrm{GL}_{2}( \mathbb{R})$ such that $\mathscr{L}$ does not have any invariant one-dimensional subspace of $\mathbb{R}^2$. 

\begin{theorem} \label{rtn}
Let $A$ be a finite subset of $\mathbb{R}^2$. Furthermore, let $\mathscr{L} \in \textrm{GL}_{2}( \mathbb{R})$ be a linear transformation such that $\mathscr{L}$ has no real eigenvalues. Then there exists an absolute constant $\delta > 0$, such that
\[  |A + \mathscr{L}(A)| \geq  4|A| - O(|A|^{1-\delta}) . \]
\end{theorem}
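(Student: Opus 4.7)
My plan is to proceed by two reductions followed by a convex-geometric / projection main argument.

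First, I would reduce to the case where $A$ is $2$-dimensional. If $A$ is contained in an affine line $\ell = p + \mathbb{R}v$, the hypothesis on $\mathscr{L}$ gives that $v$ and $\mathscr{L}(v)$ are $\mathbb{R}$-linearly independent, so writing $A = p + Tv$ with $T \subset \mathbb{R}$ and $|T|=|A|$, one computes $A + \mathscr{L}(A) = (p+\mathscr{L}(p)) + Tv + T\mathscr{L}(v)$, which has cardinality $|T|^2 = |A|^2$, dominating $4|A|$ for $|A|\ge 4$. Second, since linear changes of coordinates preserve the sumset cardinality (replacing $(\mathscr{L},A)$ by $(T\mathscr{L}T^{-1}, T(A))$ for any $T \in \mathrm{GL}_2(\mathbb{R})$), I may put $\mathscr{L}$ into its canonical rotation--dilation form. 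In the associated basis, the identification $\mathbb{R}^2 \cong \mathbb{C}$ turns $\mathscr{L}$ into multiplication by one of its eigenvalues $\alpha \in \mathbb{C} \setminus \mathbb{R}$, and the task becomes proving $|A + \alpha A| \geq 4|A| - O(|A|^{1-\delta})$ for finite $A \subset \mathbb{C}$ whose $\mathbb{R}$-span is all of $\mathbb{C}$.

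The main argument is driven by the observation that since $\alpha$ is not real, multiplication by $\alpha$ fixes no $\mathbb{R}$-line in $\mathbb{C}$. Therefore the edge directions of $\mathrm{conv}(A)$ are disjoint from the edge directions of $\alpha\cdot\mathrm{conv}(A)$, and the Minkowski sum $\mathrm{conv}(A+\alpha A) = \mathrm{conv}(A) + \alpha\cdot\mathrm{conv}(A)$ has twice as many edges as $\mathrm{conv}(A)$, organised into two $\alpha$-rotated families. I would pick a direction $w$ that is an edge direction of $\mathrm{conv}(A)$, so that $w$ and $\alpha w$ are non-parallel edge directions of $\mathrm{conv}(A+\alpha A)$ on opposite halves of the polygon, and apply a projection / fiber decomposition: project $A + \alpha A$ along $w$ and along $\alpha w$, bounding each projection by a 1D sumset inequality of the form $|\pi(A)+\pi(\alpha A)| \ge |\pi(A)|+|\pi(\alpha A)|-1$, then add fiberwise contributions to sum to $4|A|$. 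The factor $4$ arises because each pair of opposite edges in one family contributes $2|A|$ to the sumset, and the two families contribute independently thanks to the non-parallelism supplied by the non-real-eigenvalue hypothesis. This is in direct parallel with the projection/induction scheme that would be used in the proof of Theorem \ref{main}.

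The main obstacle is the power-saving error term $O(|A|^{1-\delta})$, rather than the $O(1)$ error achievable by naive convex-hull boundary counting. This requires an inverse-Freiman / stability analysis: if $|A+\alpha A|$ is close to $4|A|$ then $A$ must be close to an arithmetic progression in a direction adapted to $\alpha$. Since $\alpha$ may be transcendental, classical Freiman theory over $\mathbb{Z}$ is not available; instead one must work in the torsion-free abelian group $\langle A \cup \alpha A\rangle \subset \mathbb{C}$, use the Pl\"unnecke--Ruzsa inequality to bound iterated sumsets, and invoke Freiman's theorem for torsion-free groups to deduce the underlying GAP structure. Integrating this stability step into the inductive / peeling argument while maintaining the quantitative gain $|A|^{-\delta}$ is where the bulk of the technical work lies.
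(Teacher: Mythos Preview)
Your proposal has the right ingredients (projection/fiber counting, Freiman-type structure), but the roles you assign them are inverted, and this creates a genuine gap.

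The convex-hull / projection sketch cannot deliver the main term $4|A|$ on its own. If you project $A+\mathscr{L}(A)$ along a direction $w$ and count fibers, the relevant inequality is Grynkiewicz--Serra:
\[
|A+\mathscr{L}(A)| \ \ge\ \Bigl(\tfrac{|A|}{r_1}+\tfrac{|A|}{r_2}-1\Bigr)(r_1+r_2-1),
\]
where $r_1,r_2$ are the numbers of $w$-lines meeting $A$ and $\mathscr{L}(A)$. Expanding and using $r_1/r_2+r_2/r_1\ge 2$ does produce a $4|A|$ main term, but the error terms $-r_1-r_2-|A|/r_1-|A|/r_2$ are not controlled: for $A$ in general position every direction has $r_1,r_2\approx |A|$, and the bound collapses. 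Your suggestion of also projecting along $\alpha w$ and ``adding'' the two contributions is not a valid step---two lower bounds on the same quantity do not sum. So the geometric argument alone, with or without convex-hull edge bookkeeping, does not yield $4|A|-O(\text{anything})$.

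What the paper does is use Freiman \emph{first}, not as an afterthought for the error term. Assuming $|A+\mathscr{L}(A)|<8|A|$, Pl\"unnecke--Ruzsa gives $|A+A|\ll |A|$, and then Green--Ruzsa places $A$ (up to an $O(|A|^{1-\sigma})$ exceptional set) on $r_1\le C_1|A|^{1-\sigma}$ parallel lines in some direction $w$. The no-real-eigenvalue hypothesis then does two jobs: it forces the richest line to have at most $O(|A|^{1/2})$ points (since $|p_1+\mathscr{L}(p_1)|=|p_1|^2$), hence $r_1\gg |A|^{1/2}$; and it forces $\mathscr{L}(w)\not\parallel w$, which pins $r_2$ between $r_1/(8C)$ and $8Cr_1$. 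With $r_1,r_2$ thus trapped in $[|A|^{1/2},|A|^{1-\sigma}]$, all four error terms in the Grynkiewicz--Serra bound are $O(|A|^{1-\sigma})$, and the $4|A|$ survives. In short: Freiman is what supplies the direction $w$ and the size bounds on $r_1,r_2$ that make the projection argument work at all; it is not merely a stability refinement bolted on at the end.
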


In particular, we can choose $\mathscr{L} = \mathscr{L}_{\theta}$  for some $\theta \in (0, 2\pi) \setminus \{ \pi \}$, where $\mathscr{L}_{\theta}$ rotates vectors in $\mathbb{R}^2$ counterclockwise by angle $\theta$. As  $\theta \in (0, 2\pi) \setminus \{ \pi \}$, we see that $\mathscr{L}_{\theta}$ has no real eigenvalues. 

\begin{Corollary} \label{rotation}
Let $A$ be a finite subset of $\mathbb{R}^2$ and $\theta \in (0, 2\pi) \setminus \{ \pi \}$. Then we have
\[ |A + \mathscr{L}_{\theta}(A)| \geq  4|A| - O(|A|^{1-\delta}),\] 
for some absolute constant $\delta > 0$. 
\end{Corollary}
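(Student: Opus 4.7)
The plan is to derive this corollary directly from Theorem \ref{rtn}, which is already stated and available. All that needs to be verified is the hypothesis of that theorem, namely that the rotation $\mathscr{L}_\theta$ has no real eigenvalues whenever $\theta \in (0, 2\pi) \setminus \{\pi\}$.

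To carry this out, I would write $\mathscr{L}_\theta$ in the standard basis as the matrix
\[
M_\theta = \begin{pmatrix} \cos\theta & -\sin\theta \\ \sin\theta & \cos\theta \end{pmatrix},
\]
and compute its characteristic polynomial $\chi(\lambda) = \lambda^2 - 2\cos\theta \cdot \lambda + 1$. The discriminant is $4\cos^2\theta - 4 = -4\sin^2\theta$, so the roots $e^{\pm i\theta}$ are real precisely when $\sin\theta = 0$, that is, when $\theta$ is an integer multiple of $\pi$. Under the assumption $\theta \in (0, 2\pi) \setminus \{\pi\}$, both $\theta = 0$ and $\theta = \pi$ are excluded, so $\sin\theta \neq 0$ and $\mathscr{L}_\theta$ has no real eigenvalues.

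With this hypothesis verified, I would simply apply Theorem \ref{rtn} with $\mathscr{L} = \mathscr{L}_\theta$ to the finite set $A \subseteq \mathbb{R}^2$ to conclude
\[
|A + \mathscr{L}_\theta(A)| \geq 4|A| - O(|A|^{1-\delta})
\]
with the same absolute constant $\delta > 0$ furnished by Theorem \ref{rtn}.

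Because the corollary is an immediate specialisation of Theorem \ref{rtn}, there is no genuine obstacle in this step; the only matter requiring attention is the short eigenvalue computation above, and the substantive content of the statement lies entirely in Theorem \ref{rtn} itself. In particular, the sharpness of the constant $4$ in the main term is already inherited from the sharpness discussion for Theorem \ref{rtn}.
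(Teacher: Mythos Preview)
Your proposal is correct and matches the paper's approach exactly: the paper also notes that for $\theta \in (0,2\pi)\setminus\{\pi\}$ the rotation $\mathscr{L}_\theta$ has no real eigenvalues and then deduces the corollary as an immediate specialisation of Theorem~\ref{rtn}. Your explicit computation of the characteristic polynomial is a welcome elaboration of what the paper states in one line.
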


The main term in our lower bound is sharp as witnessed by the following example. Let 
\[B_N = \{ (a, b) \ | \ 0 \leq a, b \leq N-1 \} \cap \mathbb{Z}^2, \]
and $\theta = \pi/2$. In this case, we see that
\[ \mathscr{L}_{\theta}(B_N) = B_N - \{(N-1,0) \}, \]
and thus
\[ |B_N + \mathscr{L}_{\theta}(B_N)|  =  |B_N + B_N|  \leq  |B_{2N}|  = 4|B_N|. \]
Note that if $\theta \in \{ 0, \pi \}$, one can take $A$ to be a $1$-dimensional arithmetic progression and show that 
\[ |A + \mathscr{L}_{\theta}(A)| = 2|A| - O(1),  \]
which is best possible, as for any two finite, non-empty subsets $A, B$ of $\mathbb{R}^2$, one has
\[ |A+B| \geq |A| + |B| - 1.\]
Further, if one restricts $A$ to be $2$-dimensional and $\theta \in [0, 2\pi)$, the best lower bound that can be shown is 
\begin{equation} \label{glb} 
|A+ \mathscr{L}_{\theta}(A)| \geq 3|A| - 3,
\end{equation}
which follows from a result of Ruzsa \cite[Corollary 1.1]{Ru1994}. It is sharp for $\theta = 0$ and $\theta = \pi$ as the set
\[ C_N = \{0, e_2 \} +  \{ t \cdot e_1 \ | \ t \in \{1,2,\dots,N-1 \} \}, \]
 demonstrates. Hence when $\theta \in \{0, \pi\}$ and $A$ is $2$-dimensional, the best lower bound that we can get is $\eqref{glb}$. Corollary \ref{rotation} implies that for all other values of $\theta$, one can get a stronger lower bound for $|A + \mathscr{L_{\theta}}(A)|$.
\par
We will deduce Theorem $\ref{main}$ and Theorem $\ref{rtn}$ from a structure theorem for sets with few sums of linear transformations. 

\begin{theorem} \label{str}
Let $c$ be a positive real number and let $d$ be a natural number. Further, let $A$ be a finite subset of $\mathbb{R}^d$ and $\mathscr{L} \in \textrm{GL}_d(\mathbb{R})$ be an invertible linear transformation. If 
\[ |A+\mathscr{L}(A)| \leq c|A|, \]
then there exist parallel lines $l_1, l_2, \dots, l_r$ in $\mathbb{R}^d$, and constants $0 < \sigma \leq 1/2$ and $C_1>0$ depending only on $c$ such that 
\[ |A \cap l_1| \geq \dots \geq  |A \cap l_r| \geq  |A \cap l_1|^{1/2} \geq C_1^{-1} |A|^{\sigma}, \]
and 
\[ |A\setminus (l_1 \cup l_2 \cup \dots \cup l_r)| < C_1 c^6 |A|^{1-\sigma}. \]
\end{theorem}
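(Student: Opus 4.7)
The plan is to extract a popular direction of common differences from the hypothesis $|A+\mathscr{L}(A)| \leq c|A|$ and then organize $A$ along parallel lines in that direction. The whole argument is purely additive-combinatorial --- the role of $\mathscr{L}$ is only to give us small doubling for the pair $(A,\mathscr{L}(A))$.

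\textbf{Step 1 (Plünnecke--Ruzsa bounds).} First I would feed the assumption $|A+\mathscr{L}(A)|\leq c|A|$ into Plünnecke--Ruzsa and the Ruzsa triangle inequality to get that $|A - A|$, $|\mathscr{L}(A)-\mathscr{L}(A)|$, and $|A - \mathscr{L}(A)|$ are all $\leq c^{O(1)}|A|$. In particular one has the doubling bound $|A-A|\leq c^{O(1)}|A|$, which is what the later geometric analysis depends on.

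\textbf{Step 2 (Popular direction).} By Cauchy--Schwarz,
\[
\sum_{v\in A-A} r_{A-A}(v)^{2} \;=\; E(A,A) \;\geq\; \frac{|A|^{4}}{|A-A|} \;\geq\; \frac{|A|^{3}}{c^{O(1)}}.
\]
A dyadic pigeonhole on the representation function then yields a scale $\tau$ and a set of vectors $V\subseteq A-A$ with $r_{A-A}(v)\geq\tau$ for $v\in V$ whose contribution is a $(\log|A|)^{-O(1)}$ fraction of the total energy. A second pigeonhole groups $V$ by the line through the origin on which it lies, giving a single direction $u$ such that the vectors of $V$ parallel to $u$ still account for a polynomially dense slice of the energy. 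Equivalently, if one partitions $A$ into lines parallel to $u$, namely $A = \bigsqcup_{i}(A\cap l_{i})$ with $|A\cap l_{i}| = n_{i}$, then $\sum_{i} n_{i}^{2}$ is at least $|A|^{2+\eta}$ for some $\eta = \eta(c)>0$.

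\textbf{Step 3 (Isolating the rich lines).} A further dyadic decomposition of the $n_{i}$'s selects a single scale $T$ such that the lines with $n_{i}\asymp T$ cover a fraction of $A$ polynomially close to $1$. Ordering these lines so that $n_{1}\geq n_{2}\geq\cdots\geq n_{r}$, the scale $T$ will be the common lower bound for $n_{r}$, and one chooses $\sigma$ so that $T\asymp|A|^{2\sigma}$ while the cutoff $n_{r}\geq n_{1}^{1/2}$ is enforced by the dyadic grouping (automatically yielding $\sigma\leq 1/2$). This is where the $C_{1}^{-1}|A|^{\sigma}$ threshold enters.

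\textbf{Step 4 (Controlling the residue).} The remaining task is to show that the set $A' := A\setminus(l_{1}\cup\cdots\cup l_{r})$ has size $\leq C_{1}c^{6}|A|^{1-\sigma}$. For this I would run a second-moment/Plünnecke argument on $A'$ itself: any line $l'$ parallel to $u$ with $|A\cap l'|$ in the dyadic range just \emph{below} $T$ would, when paired with any rich line, generate many distinct sums in $A+\mathscr{L}(A)$, contradicting the $c|A|$ bound. Iterating the Plünnecke inequality across both coordinates (lines parallel to $u$ and their images under $\mathscr{L}$) produces the $c^{6}$ factor, essentially as a cube of the $c^{2}$ doubling squared through one application of Cauchy--Schwarz.

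The main obstacle, and where I expect most of the technical work to live, is Step 4: it is not enough that rich lines exist, one also needs the off-line residue to be strictly smaller than $|A|$ by a polynomial factor. Balancing the choice of $\sigma$ so that the dyadic losses in Steps 2--3 do not eat the polynomial savings in Step 4, while simultaneously keeping the $c$-dependence down to $c^{6}$, is the delicate part of the argument.
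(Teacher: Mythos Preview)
Your Step 1 is fine and matches the paper: Ruzsa's triangle inequality with $U=A$, $V=W=-\mathscr{L}(A)$ gives $|A-A|\leq c^{2}|A|$, and then $|A+A|\leq |A-A|^{3}/|A|^{2}\leq c^{6}|A|$. But Step 2 does not work, and this is a genuine gap, not a technicality.

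The problem is the pigeonhole over directions. The number of distinct directions present in $A-A$ can be as large as $|A-A|\sim c^{O(1)}|A|$; for instance, if $A=\{1,\dots,N\}^{2}$ (so $|A+A|\leq 4|A|$), there are $\gg N^{2}=|A|$ primitive directions. The identity $\sum_{u}\bigl(\sum_{i} n_{i}(u)^{2}-|A|\bigr)=|A|^{2}-|A|$ (off-diagonal pairs are counted in exactly one direction) then shows that pigeonhole only secures a direction with $\sum_{i} n_{i}^{2}\gtrsim |A|$, which is trivial since $\sum_{i} n_{i}^{2}\geq\sum_{i} n_{i}=|A|$. No positive $\eta$ comes out. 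In fact the inequality you wrote, $\sum_{i} n_{i}^{2}\geq |A|^{2+\eta}$, is impossible for any $\eta>0$ because $\sum_{i} n_{i}^{2}\leq (\max_{i} n_{i})\sum_{i} n_{i}\leq |A|^{2}$. So Step 2 collapses, and with it Steps 3--4, which rely on having already found a direction with polynomially rich lines.

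The paper's remedy is to use Freiman's theorem (in the Green--Ruzsa form) in place of your energy/direction pigeonhole. From $|A+A|\leq c^{6}|A|$ one gets $A\subseteq P$ with $P$ a proper progression of arithmetic dimension $s=s(c)$ and size $\leq C_{1}(c)|A|$. The longest side of $P$ has length $L_{1}\geq |A|^{1/s}$, and $P$ is a union of at most $C_{1}|A|^{1-1/s}$ translates of the one-dimensional progression $Q$ along that side; pigeonhole on these translates produces a line $l_{1}$ with $|A\cap l_{1}|\geq C_{1}^{-1}|A|^{1/s}$. This is how the ``popular direction'' is found without any loss in $|A|$. The residue bound (your Step 4) is then a one-line sumset estimate, much simpler than you anticipate: writing $B=A\setminus(l_{1}\cup\cdots\cup l_{r})$ where $r$ is the cutoff at $|A\cap l_{1}|^{1/2}$, one has
\[
c^{6}|A|\;\geq\;|A+A|\;\geq\;|p_{1}+B|\;=\;\sum_{j>r}|p_{1}+p_{j}|\;\geq\;(k-r)|p_{1}|\;>\;|p_{1}|^{1/2}\,|B|,
\]
so $|B|<C_{1}c^{6}|A|^{1-\sigma}$ with $\sigma=1/(2s)$. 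The factor $c^{6}$ is exactly the doubling constant from Step 1, not the output of an iterated Pl\"unnecke argument.
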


We note that the problem of looking at sums of dilates in vector spaces is a generalisation of estimating lower bounds for sums of dilates of subsets of integers.  Originally, Konyagin and \L aba \cite{KL2006} worked on sets of the form $A + \lambda \cdot A$ for $A \subseteq \mathbb{R}$ and transcendental $\lambda$. Subsequently, Nathanson \cite{Na2008} gave lower bounds for $|A + \lambda \cdot A|$ when $A \subseteq \mathbb{Z}$ and $\lambda \in \mathbb{N} \setminus \{1\}$. Different variants of this problem were tackled by many authors (see \cite{BalSh13},  \cite{CHS2009}, \cite{CSV2010}, \cite{DCS2014}, \cite{HR2011} and \cite{Lj2013}) and in particular, the general case of estimating $|\lambda_1 \cdot A + \dots + \lambda_k \cdot A|$ for co-prime integers $\lambda_1, \dots, \lambda_k$ was first treated by Bukh \cite{Bu2008}. Bukh gave a lower bound for size of such sets and the main term in Bukh's bound was sharp. The final improvement for Bukh's error term was given by Shakan \cite{Sh2016}. As previously mentioned, this result was generalised to $d$-dimensional subsets of $\mathbb{Z}^d$ by Balog and Shakan in \cite{BalSh15}. We refer the reader to \cite{BalSh13}, \cite{Bu2008} and \cite{Sh2016} for a more detailed introduction to this problem.
\par

We remark that there are multiple variants of this problem that are currently unsolved and are of independent interest. In \cite[Corollary 3.7]{KL2006}, Konyagin and \L aba proved that for any transcendental real number $\lambda$ and finite set $A \subseteq \mathbb{R}$ such that $|A|>1$, one has
\[ |A + \lambda \cdot A | = \Omega(|A| \log|A| / \log{\log|A|}).  \]
They further showed that there exist arbitrarily large sets $A$ with
\[ |A + \lambda \cdot A |  = \exp(O( \log^{1/2} |A|)) |A|. \] 
There were subsequent improvements to Konyagin and \L aba's result by Sanders \cite{Sa2008}, \cite{Sa2012} and Schoen \cite{Sc2011}. In particular, Sanders \cite[Theorem 11.8]{Sa2012} showed that one can improve Konyagin and \L aba's lower bound to
\[ |A +  \lambda \cdot A | = \exp(\Omega( \log^{\Omega(1)} 2|A|)) |A|. \]
It would be interesting to find the exact shape of a sharp lower bound for $|A + \lambda \cdot A |$ when $\lambda$ is a transcendental real number. 
\par

Similarly, one might be interested in estimates for $| A + \lambda \cdot A  |$ when $\lambda$ is an algebraic number and $A \subseteq \mathbb{Z} [\lambda]$. As Shakan remarks in \cite[Question 1.2]{Sh2016}, this is closely related to a conjecture of Bukh that asks for lower bounds for $| \mathscr{L}_1(A) + \dots + \mathscr{L}_k(A)|$ where $A \subseteq \mathbb{Z}^d$ and $\mathscr{L}_1, \dots, \mathscr{L}_k$ are linear transformations from $\mathbb{Z}^d$ to $\mathbb{Z}^d$. 

\begin{Conjecture} \label{BkhSh}
Let $\mathscr{L}_1, \dots, \mathscr{L}_k$ be linear transformations from $\mathbb{Z}^d$ to $\mathbb{Z}^d$ that do not share a non-trivial invariant subspace and satisfy 
\[ \mathscr{L}_1 (\mathbb{Z}^d) + \dots +  \mathscr{L}_k (\mathbb{Z}^d) = \mathbb{Z}^d.\]  
Then for any $A \subseteq \mathbb{Z}^d$, we have
\[  |\mathscr{L}_1(A) + \dots + \mathscr{L}_k(A)| \geq \big(  |\det(\mathscr{L}_1)|^{1/d} + \dots + |\det(\mathscr{L}_k)|^{1/d}  \big)^{d} |A| - o(|A|). \] 
\end{Conjecture}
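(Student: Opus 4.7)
The plan is to attack Conjecture~\ref{BkhSh} by induction on the number of summands $k$ and the dimension $d$, using Theorems~\ref{main},~\ref{rtn}, and~\ref{str} as base cases and as the engine for dimension reduction. The statement is the discrete analog of the Brunn--Minkowski-type inequality
\[ \mathrm{vol}\Big(\sum_{i=1}^k \mathscr{L}_i(K)\Big)^{1/d} \;\geq\; \sum_{i=1}^k |\det\mathscr{L}_i|^{1/d}\,\mathrm{vol}(K)^{1/d}, \]
which for convex bodies $K$ follows directly from Brunn--Minkowski; the task is to transfer it to an arbitrary finite $A \subseteq \mathbb{Z}^d$. The guiding philosophy is to use Theorem~\ref{str} (together with its $k$-fold generalisations) to force $A$ close to a generalised arithmetic progression adapted to the $\mathscr{L}_i$, and then verify the bound by a direct count on such a model set.

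For the base of the induction on $d$, the one-dimensional case reduces to the sharp sum-of-dilates bound of Bukh and Shakan: the hypothesis $\sum_i \mathscr{L}_i(\mathbb{Z}) = \mathbb{Z}$ is precisely coprimality of the integers $\mathscr{L}_i$, the no-common-invariant-subspace hypothesis is vacuous, and the constant $\sum_i |\mathscr{L}_i|$ matches. The case $k = 2$, $d = 2$ with $\mathscr{L}_1 = I$ and $|\det\mathscr{L}_2| = 1$ is precisely Theorem~\ref{rtn}, since the no-common-invariant-subspace hypothesis for $\{I, \mathscr{L}_2\}$ is equivalent to $\mathscr{L}_2$ having no real eigenvalue, and the constant $(1+1)^2 = 4$ matches.

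For the inductive step, suppose $|\sum_i \mathscr{L}_i(A)| \leq c|A|$. For each $j \neq 1$, fixing elements of $A$ in the other coordinates gives an injection $\mathscr{L}_1(A) + \mathscr{L}_j(A) \hookrightarrow \sum_i \mathscr{L}_i(A)$, hence $|A + \mathscr{L}_1^{-1}\mathscr{L}_j(A)| = |\mathscr{L}_1(A) + \mathscr{L}_j(A)| \leq c|A|$. Applying Theorem~\ref{str} to $(A, \mathscr{L}_1^{-1}\mathscr{L}_j)$ then produces parallel lines $l_1, \dots, l_r$ in a direction $v \in \mathbb{R}^d$ carrying almost all of $A$. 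The hypothesis that the $\mathscr{L}_i$ share no non-trivial invariant subspace forces $\mathrm{span}(v)$ not to be preserved by every $\mathscr{L}_i$; this gives a fibred decomposition of $A$ over a quotient of dimension $d - 1$, and one hopes to conclude by applying the inductive hypothesis along each fibre (dimension $d - 1$) and between fibres (dimension $1$), combined via a Pl\"unnecke--Ruzsa slicing estimate.

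The hard part will be recovering the sharp constant $\big(\sum_i |\det\mathscr{L}_i|^{1/d}\big)^d$. Theorem~\ref{str} controls $A$ only up to a loss polynomial in $c$, and naive slicing combines constants additively rather than as $d$-th powers, so the Brunn--Minkowski exponent $d$ is lost at each stage of the induction. A successful proof will likely require a genuinely $d$-dimensional inverse theorem identifying extremal $A$ as being close to a generalised arithmetic progression simultaneously adapted to all the $\mathscr{L}_i$, together with a multi-dimensional Freiman-type modelling lemma bridging the discrete and continuous settings. This---rather than the induction scheme itself---is what I expect to be the principal obstacle.
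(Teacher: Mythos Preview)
The statement you are attempting to prove is labelled \emph{Conjecture} in the paper, and the paper does not prove it in general. The paper only establishes two very special cases: the one-dimensional case, which it inherits from the sum-of-dilates literature (Bukh, Shakan), and the case $d=k=2$, $\mathscr{L}_1 = I$, $|\det\mathscr{L}_2| = 1$, which is exactly Theorem~\ref{rtn}. Beyond that the paper claims only ``partial progress towards an analogue of Conjecture~\ref{BkhSh} in $\mathbb{R}^2$'' via Theorem~\ref{costr}. So there is no proof in the paper to compare your proposal against.

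What you have written is not a proof but a strategy sketch, and you correctly flag its own gap: the induction scheme you describe cannot, as it stands, recover the sharp constant $\big(\sum_i |\det\mathscr{L}_i|^{1/d}\big)^d$. Theorem~\ref{str} only gives you a single direction $v$ in which $A$ is fibred, with polynomial losses in the constants, and combining the fibre and base estimates by slicing yields additive rather than multiplicative constants --- exactly the Brunn--Minkowski exponent loss you identify. There are also a few smaller issues: the $\mathscr{L}_i$ in the conjecture are maps $\mathbb{Z}^d \to \mathbb{Z}^d$ and need not be invertible, so $\mathscr{L}_1^{-1}\mathscr{L}_j$ may not exist as an element of $\mathrm{GL}_d(\mathbb{R})$; and even when they are invertible, the direction $v$ produced by Theorem~\ref{str} depends on $j$, so one must argue separately that a \emph{single} fibration works for all summands simultaneously.

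Your diagnosis of the principal obstacle --- the need for a genuinely $d$-dimensional inverse theorem identifying near-extremal $A$ with generalised arithmetic progressions simultaneously adapted to all the $\mathscr{L}_i$ --- is exactly right, and is essentially why the conjecture remains open. The paper's contribution is precisely to carry out this programme in the narrow case where the one direction produced by Theorem~\ref{str} already suffices ($d=2$, $\mathscr{L}$ with no real eigenvalue, so the fibre and base pictures can be played off against each other via Lemma~\ref{gs}).
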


We observe that one can conjecture a similar result for linear transformations from $\mathbb{R}^d$ to $\mathbb{R}^d$. In \S5, we present a structure theorem, that is, Theorem $\ref{costr}$, which makes partial progress towards an analogue of Conjecture $\ref{BkhSh}$ in $\mathbb{R}^2$. Furthermore, Theorem $\ref{costr}$ implies Theorem $\ref{rtn}$ in a straightforward manner, which in itself, shows that Conjecture $\ref{BkhSh}$ is true when $d,k=2$ and $\mathscr{L}_1, \mathscr{L}_2$ are linear transformations from $\mathbb{R}^2$ to $\mathbb{R}^2$ with $\mathscr{L}_1$ as the identity matrix and $|\det(\mathscr{L}_2)| = 1$. 
\par

Lastly, this problem can also be considered in the finite field setting, that is, given a prime $p$ and $A \subseteq \mathbb{F}_p$, we look at $A + q \cdot A$ where $q \in \mathbb{F}_p$. When $q=1$, the question is answered by the Cauchy--Davenport theorem. But for general values of $q$, the question remains open, with partial results in \cite{Pl2011} and \cite{Po2013}. 

 \par
We now outline the structure of our paper. We dedicate \S2 to present some preliminary results that we will use in our paper. In \S3  we will prove Theorem $\ref{str}$. We use \S4 to combine Theorem $\ref{str}$ with some counting arguments from Combinatorial Geometry to show Theorem $\ref{main}$. Lastly, in \S5, we prove Theorem $\ref{costr}$ and Theorem $\ref{rtn}$. \par

\section{Preliminaries}

In our proof of Theorem $\ref{str}$, we will use two standard inequalities to move from sum of dilates to sumsets. The first of these two inequalities was originally shown by Ruzsa \cite{Ru1996}. We mention these results as stated in \cite[Lemma 2.6]{TV2006} and \cite[Corollary 2.12]{TV2006}.

\begin{lemma} \label{rusza}
Suppose that $U,V,W$ are three finite sets in some abelian group $G$. Then 
\begin{equation} \label{ru1} |U||V-W| \leq |U-V||U-W|  \end{equation}
and
\begin{equation} \label{ru2} |U+V| \leq \frac{|U-V|^3}{|U||V|}.  \end{equation}
\end{lemma}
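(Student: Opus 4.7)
The plan is to establish \eqref{ru1} via an explicit injection, and then to derive \eqref{ru2} from \eqref{ru1} by iteration, in the style of the Pl\"unnecke--Ruzsa sum--difference inequality.

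For \eqref{ru1}, I would construct an injection $\phi : U \times (V - W) \to (U - V) \times (U - W)$. For each element $x \in V - W$, fix once and for all a representation $x = v(x) - w(x)$ with $v(x) \in V$ and $w(x) \in W$, and set
\[ \phi(u, x) = \bigl( u - v(x),\ u - w(x) \bigr). \]
The image manifestly lies in $(U - V) \times (U - W)$. For injectivity, suppose $\phi(u, x) = (y, z)$; then $y - z = w(x) - v(x) = -x$, so $x$ is recovered as $z - y$ from the image pair. Once $x$ is known, $v(x)$ and $w(x)$ are pinned down by the fixed choice, and then $u = y + v(x)$ follows. Counting the cardinalities of domain and codomain yields \eqref{ru1}.

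For \eqref{ru2}, I would reformulate \eqref{ru1} as the triangle inequality for the Ruzsa pseudo-distance $d(X, Y) := \log(|X - Y|/\sqrt{|X||Y|})$, and note that \eqref{ru2} is equivalent to the estimate $d(U, -V) \leq 3\, d(U, V)$. This I would obtain by chaining the triangle inequality through two carefully chosen auxiliary sets so as to bound $d(U, -V)$ by a multiple of $d(U, V)$ alone. An alternative route is via the Pl\"unnecke--Ruzsa sumset machinery, which upgrades a bound on $|U - V|$ to uniform control over multi-fold sumsets and differences of $U$ and $V$, from which \eqref{ru2} follows after a short computation.

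The principal obstacle lies in the derivation of \eqref{ru2}: a single application of \eqref{ru1} tends to introduce auxiliary sumsets such as $|U - V + V|$, and one has to arrange three applications (or else appeal to Pl\"unnecke--Ruzsa) so that these auxiliaries telescope and only the cube $|U - V|^3$ survives on the right-hand side. The injection step for \eqref{ru1}, by contrast, is direct and essentially mechanical.
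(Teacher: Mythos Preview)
The paper does not prove this lemma; it simply quotes the two inequalities from \cite[Lemma~2.6 and Corollary~2.12]{TV2006}. So there is no argument in the paper to compare against, and I assess your proposal on its own merits.

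Your injection for \eqref{ru1} is correct and is exactly the standard proof.

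For \eqref{ru2}, however, your primary plan --- chain the Ruzsa triangle inequality through auxiliary sets to reach $d(U,-V)\le 3\,d(U,V)$ --- cannot succeed. Among the four sets $\pm U,\pm V$ the independent Ruzsa distances reduce (via $d(X,Y)=d(-X,-Y)=d(Y,X)$) to $\alpha:=d(U,V)$, $\beta:=d(U,-V)$, $\gamma:=d(U,-U)$, $\delta:=d(V,-V)$, and the \emph{only} relations the triangle inequality imposes on these four numbers are $|\alpha-\beta|\le\gamma,\delta\le\alpha+\beta$. These constraints do not force $\beta\le 3\alpha$; indeed every attempted chain, e.g.\ $\beta\le\alpha+\delta\le 2\alpha+\beta$, collapses to a tautology. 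There are no further sets available in the problem to route through, so \eqref{ru2} is a genuinely new inequality that cannot be extracted from \eqref{ru1} by iteration alone. The obstacle you flag in your final paragraph is therefore not a matter of bookkeeping but a real obstruction.

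Your stated alternative via Pl\"unnecke--Ruzsa is the correct repair and is how the inequality is typically established: writing $K=|U-V|/|U|$, Pl\"unnecke--Ruzsa gives $|V+V|\le K^{2}|U|=|U-V|^{2}/|U|$, and then a single application of \eqref{ru1} with the triple $(V,U,-V)$ yields
\[
|V|\,|U+V|=|V|\,|U-(-V)|\le|V-U|\,|V+V|\le \frac{|U-V|^{3}}{|U|},
\]
which is \eqref{ru2}. You should commit to this route and drop the chaining sketch.
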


Another important ingredient for the proof of Theorem $\ref{str}$ will be the following generalisation of Freiman's theorem on sets with small doubling to arbitrary abelian groups by Green and Ruzsa \cite{GR2007}. In order to state the result, we have to give some additional definitions. Given an abelian group $G$, we define a \emph{proper progression} $P$ of \emph{arithmetic dimension} $s$ and size $L$ as
\[  P = \{   v_0 + u_1 v_1 + \dots + u_s v_s \ | \ 0 \leq u_i < L_i \ (1 \leq i \leq s)  \},  \]
where $L_1 L_2 \dots L_s = L$ and $v_0, v_1, \dots, v_s$ are elements of $G$ such that all the sums in the progression are distinct. We further define a \emph{coset progression} to be a set of the form $P+H$ where $P$ is a proper progression and $H$ is a subgroup of $G$. It is important to not confuse the arithmetic dimension of a progression $P$ as defined above and the dimension of a subset $A$ of $\mathbb{R}^d$ as defined earlier to be the dimension of the affine subspace spanned by $A$. We now state Green and Ruzsa's result  \cite[Theorem 1.1]{GR2007}.

\begin{lemma} \label{GrRu}
Let $A$ be a subset of an abelian group $G$ such that $|A+A| \leq K|A|$. Then $A$ is contained in a coset progression of arithmetic dimension $s \leq CK^4 \log{(K+2)}$ and size $L = |P+H| \leq \ e^{CK^4 {\log}^2 {(K+2)}} |A|$, for some constant $C > 0$.  
\end{lemma}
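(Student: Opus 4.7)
My plan is to adapt Ruzsa's proof of Freiman's theorem in $\mathbb{Z}$ to the setting of an arbitrary abelian group $G$, following the broad strategy that Green and Ruzsa pursue in \cite{GR2007}. The starting point is the Pl\"unnecke--Ruzsa inequalities applied to the hypothesis $|A+A| \leq K|A|$: these immediately give $|nA-mA| \leq K^{n+m}|A|$ for all non-negative integers $n,m$, so in particular $|2A-2A| \leq K^4|A|$, and all the iterated sumsets that appear later remain controlled polynomially in $K$.

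Next, I would pass to a finite model. In the torsion-free case one can mimic Ruzsa's integer argument by taking a Freiman $8$-homomorphism of $A$ into $\mathbb{Z}/N\mathbb{Z}$ for a suitable prime $N$. For general $G$ one has to split $\langle A\rangle$ into a torsion-free part and a bounded-exponent part, and build a modeling lemma for each; the bounded-exponent part is precisely what will contribute the subgroup $H$ in the final coset progression $P+H$. This is the first place where the argument departs substantially from Ruzsa's original one, and it is arranged so that $A$ ends up with density $\gg K^{-2}$ inside the finite model $G'$.

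With a finite abelian group model $G'$ in hand and $|A|/|G'| \gg K^{-2}$, I would invoke a Bogolyubov-type lemma: a standard Fourier calculation on $G'$ shows that $2A-2A$ contains a Bohr set $B(\Lambda,\rho)$ with $|\Lambda|=O(K^4\log(K+2))$ and $\rho \gg 1$. A Minkowski / geometry-of-numbers argument applied to this Bohr set then produces a proper coset progression $P_0+H_0$ of arithmetic dimension $s \leq |\Lambda|$ contained in $B(\Lambda,\rho)$, with the total size of $P_0+H_0$ controlled by an appropriate volume estimate in $\mathbb{R}^{|\Lambda|}$.

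Finally, I would transfer back from the finite model to $G$ and invoke Ruzsa's covering lemma. Since $A+(P_0+H_0) \subseteq 3A-2A$ has size at most $K^5|A|$, the covering lemma furnishes a set $X\subseteq G$ of size $O(K^5)$ with $A\subseteq P_0+H_0+X$; the extra set $X$ is then absorbed into a slightly enlarged proper coset progression by appending $O(\log|X|)$ new generators. Tracking all quantitative losses carefully yields the stated bounds $s\leq CK^4\log(K+2)$ and $L\leq \exp(CK^4\log^2(K+2))|A|$. The main obstacle, and the real technical heart of the argument, will be the Bogolyubov-plus-geometry-of-numbers step: extracting a \emph{proper} coset progression inside $B(\Lambda,\rho)$ without losing further polynomial factors in $K$ on the dimension, while simultaneously balancing the logarithmic losses coming from the modeling and covering reductions, is what ultimately pins down the exponents in the conclusion.
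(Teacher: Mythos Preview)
The paper does not prove this lemma at all: Lemma~\ref{GrRu} is quoted verbatim from Green and Ruzsa \cite[Theorem~1.1]{GR2007} as a black-box preliminary result, with no argument supplied. So there is nothing in the paper to compare your proposal against --- the ``paper's own proof'' consists of a citation.

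That said, your sketch is a faithful high-level outline of the Green--Ruzsa argument itself (Pl\"unnecke--Ruzsa to control iterated sumsets, a Freiman-type modeling lemma to pass to a finite group of density $\gg K^{-2}$, Bogolyubov to find a large Bohr set in $2A-2A$, geometry of numbers to extract a proper coset progression from the Bohr set, and Ruzsa covering to pull back), and the quantitative bookkeeping you indicate is consistent with the stated bounds. For the purposes of the present paper, however, none of this is needed: you should simply cite \cite{GR2007} and move on, exactly as the paper does.
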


As a remark, we note that Lemma $\ref{GrRu}$ has been quantitatively improved by many authors (for instance, see \cite{San2012}, \cite{Sc2011}). In particular, much work has been done on improving the dependence of $s$ and $L$ on $K$. At the same time, we observe that Theorem $\ref{str}$ refers to the existence of constants $0 < \sigma \leq 1/2$, and $C_1>0$  such that the theorem holds and does not deal with the quantitative dependence of $\sigma$ and $C_1$ on $c$. Thus, for our purposes, it suffices to use Lemma $\ref{GrRu}$ as stated.  
\par

Note that if the group $G$ is torsion free, then the finite subgroup $H$ must be trivial for finite $A$. Thus if $A$ is a subset of $\mathbb{Z}^d$ or $\mathbb{R}^d$ and $A$ has small doubling, then $A$ must lie in a proper progression $P$ of bounded arithmetic dimension and size proportional to size of $A$. 
\par

In our proof of Theorem $\ref{main}$, we will frequently use a straightforward consequence of a result of Shakan \cite[Theorem 1.1]{Sh2016}.
\begin{lemma} \label{BalSh}
Given distinct co-prime integers $q,s$ there exists a constant $C_{q,s}$ such that for every finite subset $A$ of $\mathbb{Z}$, one has
\[ |q\cdot A + s\cdot A| \geq (|q|+|s|)|A| - C_{q,s}.    \]
\end{lemma}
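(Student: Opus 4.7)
The plan is to derive Lemma \ref{BalSh} as a direct specialisation of Shakan's Theorem 1.1 from \cite{Sh2016}, which establishes the inequality
\[ |\lambda_1 \cdot A + \cdots + \lambda_k \cdot A| \geq (|\lambda_1| + \cdots + |\lambda_k|)|A| - C(\lambda_1, \ldots, \lambda_k) \]
for every $k \geq 2$, every tuple of nonzero integers $\lambda_1, \ldots, \lambda_k$ with $\gcd(\lambda_1, \ldots, \lambda_k) = 1$, and every finite $A \subseteq \mathbb{Z}$. The crucial features of Shakan's statement for our purposes are that the error term is uniformly bounded in $|A|$ and that the coefficients may carry either sign. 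I would therefore invoke the inequality with $k = 2$, $\lambda_1 = q$ and $\lambda_2 = s$, and set $C_{q,s} := C(q, s)$. Since the coprimality of $q$ and $s$ is exactly the hypothesis needed, the desired bound drops out with no intermediate computation.

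The few genuinely degenerate configurations of $(q, s)$ not covered by Shakan's hypotheses can be handled by hand: if one of $q, s$ is zero then coprimality forces the other to lie in $\{-1, 1\}$, so the claim collapses to $|A| \geq |A| - C_{q,s}$ and is trivial; and if $\{q, s\} = \{1, -1\}$, the claim reduces to the textbook bound $|A - A| \geq 2|A| - 1$ valid in any torsion-free abelian group. No substantive difficulty arises at any stage, since the entire content of Lemma \ref{BalSh} is subsumed by Shakan's estimate and the verification is essentially a hypothesis check. Should Shakan's statement in a particular formulation restrict to positive coefficients, the reduction is painless via the bijection $x \mapsto -x$ on $\mathbb{Z}$, which preserves cardinality and converts $|q \cdot A + s \cdot A|$ into $|(-q) \cdot A + (-s) \cdot A|$, allowing us to adopt whichever sign convention on $q, s$ is convenient. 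This is why the paper terms the lemma a \emph{straightforward consequence} of Shakan's result.
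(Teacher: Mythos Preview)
Your proposal is correct and matches the paper's approach exactly: the paper does not supply a proof at all, but introduces the lemma as ``a straightforward consequence of a result of Shakan \cite[Theorem 1.1]{Sh2016},'' which is precisely the specialisation $k=2$, $\lambda_1=q$, $\lambda_2=s$ that you spell out. Your additional handling of degenerate sign and zero cases is a harmless bonus not present in the paper.
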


In fact, Balog and Shakan give an explicit upper bound for the additive constant $C_{q,s}$. In \cite{Sh2016}, Shakan remarks that results like Lemma $\ref{BalSh}$ can be extended to $A \subseteq \mathbb{R}$ by using a result from \cite[Lemma 5.25]{TV2006}. For completeness, we record the same below.

\begin{lemma} \label{BalSh1}
Given distinct co-prime integers $q,s$ there exists a constant $C_{q,s}$ such that for every finite subset $A$ of $\mathbb{R}$, one has
\[ |q\cdot A + s\cdot A| \geq (|q|+|s|)|A| - C_{q,s}.    \]
\end{lemma}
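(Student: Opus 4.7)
The plan is to reduce the real case to the integer case of Lemma \ref{BalSh} via a Freiman embedding. The key observation is that the value of $|q\cdot A+s\cdot A|$ is determined purely by the additive combinatorial structure of $A$ — specifically, by which equalities of the form $qa_1+sb_1=qa_2+sb_2$ hold among elements of $A$ — and such equalities are precisely what a Freiman isomorphism of the correct order is designed to preserve.

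First, I would set $k=|q|+|s|$ and verify that if $\phi:A\to A'$ is a Freiman $k$-isomorphism, then $|q\cdot A+s\cdot A|=|q\cdot A'+s\cdot A'|$. Indeed, for any $a_1,b_1,a_2,b_2\in A$, the equation $qa_1+sb_1=qa_2+sb_2$ can be rewritten, after moving negatively-signed coefficients to the opposite side, as an identity between two sums of exactly $k=|q|+|s|$ elements of $A$ (each $a_i$ is repeated $|q|$ times and each $b_i$ is repeated $|s|$ times, with placement on the two sides determined by the signs of $q$ and $s$). By the definition of a Freiman $k$-isomorphism, this identity holds in $A$ if and only if the corresponding identity with each entry replaced by its $\phi$-image holds in $A'$. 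Hence the natural map $q\cdot A+s\cdot A\to q\cdot A'+s\cdot A'$ sending $qa+sb\mapsto q\phi(a)+s\phi(b)$ is well-defined and bijective, giving the desired equality of cardinalities.

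Next, I would invoke \cite[Lemma 5.25]{TV2006}, which asserts that any finite subset of a torsion-free abelian group is Freiman $k$-isomorphic to a finite subset of $\mathbb{Z}$, for any fixed $k$. Applying this to the finite set $A\subseteq\mathbb{R}$ with $k=|q|+|s|$ produces a finite set $A'\subseteq\mathbb{Z}$ with $|A'|=|A|$ and, by the previous step, $|q\cdot A+s\cdot A|=|q\cdot A'+s\cdot A'|$. Lemma \ref{BalSh} applied to $A'$ then yields
\[
|q\cdot A'+s\cdot A'|\;\geq\;(|q|+|s|)|A'|-C_{q,s},
\]
and substituting $|A'|=|A|$ and equating the two sumset cardinalities gives the claim.

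The only non-routine step is the verification in the first paragraph that a Freiman $k$-isomorphism of order $k=|q|+|s|$ really does preserve the equivalence relation cutting out $|q\cdot A+s\cdot A|$; once this is in place, the argument is essentially a black-box invocation of \cite[Lemma 5.25]{TV2006} and Lemma \ref{BalSh}. The mild care needed there is handling the signs of $q$ and $s$ uniformly, but since Freiman isomorphisms in torsion-free groups commute with negation (as one may always subtract terms from both sides), this presents no genuine difficulty.
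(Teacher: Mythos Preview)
Your proposal is correct and follows exactly the route the paper indicates: the paper does not spell out a proof but simply records that Lemma~\ref{BalSh} extends to $A\subseteq\mathbb{R}$ via \cite[Lemma~5.25]{TV2006}, which is precisely the Freiman-embedding argument you carry out in detail. Your verification that a Freiman $(|q|+|s|)$-isomorphism preserves $|q\cdot A+s\cdot A|$ (by rearranging $qa_1+sb_1=qa_2+sb_2$ into an equality of two $(|q|+|s|)$-fold sums of elements of $A$) is the correct way to justify this step.
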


Note that as sums of dilates are preserved under invertible linear transformations, we can deduce that given a finite $1$-dimensional set $A \subset \mathbb{R}^d$ and distinct co-prime integers $q$ and $s$, there exists a constant  $C_{q,s}$ such that one has
\begin{equation} \label{bs13} |q\cdot A+s\cdot A| \geq (|q|+|s|)|A| - C_{q,s}. \end{equation}
Another result which we will use is a result on $d$-dimensional sumsets in $\mathbb{R}^d$ by Ruzsa \cite[Corollary 1.1]{Ru1994}. 

\begin{lemma} \label{ruzsa}  
Let $A, B$ be finite, non-empty subsets of $\mathbb{R}^d$ such that $|A| \geq |B|$ and $dim(A+B) = d$. Then we have
\[ |A+B| \geq |A| + d|B| - d(d+1)/2.\]
\end{lemma}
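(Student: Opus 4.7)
The plan is to prove Lemma \ref{ruzsa} by induction on the ambient dimension $d$. For the base case $d = 1$, the inequality reduces to the classical $|A + B| \geq |A| + |B| - 1$ for finite subsets of $\mathbb{R}$, which matches the required bound since $d(d+1)/2 = 1$; this is proved by ordering $A = \{a_1 < \cdots\}$ and $B = \{b_1 < \cdots\}$ and exhibiting an explicit strictly increasing chain of $|A| + |B| - 1$ sums. For the inductive step with $d \geq 2$, assume the lemma in all smaller dimensions. If $|B| = 1$, the inequality reduces to $|A| \geq |A| - d(d-1)/2$, which holds trivially, so we may assume $|B| \geq 2$.

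The main argument proceeds by picking a direction $v \in \mathbb{R}^d$ in sufficiently generic position and slicing both $A$ and $B$ into hyperplane cross-sections orthogonal to $v$: write $A = A_1 \sqcup \cdots \sqcup A_s$ at heights $a_1 < \cdots < a_s$ and $B = B_1 \sqcup \cdots \sqcup B_t$ at heights $b_1 < \cdots < b_t$, where one can ensure $s, t \geq 2$ provided neither $A$ nor $B$ is a single point. Applying the $d = 1$ case to the projections $\pi_v(A), \pi_v(B) \subset \mathbb{R}$, the projected sumset $\pi_v(A + B) = \pi_v(A) + \pi_v(B)$ has at least $s + t - 1$ elements, so $A + B$ contains at least $s + t - 1$ nonempty slices, each lying in a hyperplane and thus presenting a $(d-1)$-dimensional subproblem. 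I would lower bound $|A + B|$ by summing the sizes of the extremal L-shaped collection of slices---namely those at heights $a_1 + b_j$ for $j = 1, \ldots, t$ (containing $A_1 + B_j$) and $a_i + b_t$ for $i = 2, \ldots, s$ (containing $A_i + B_t$)---and applying the inductive hypothesis within each hyperplane to lower bound $|A_1 + B_j|$ and $|A_i + B_t|$. Summing the resulting estimates and invoking $|A| = \sum_i |A_i|$, $|B| = \sum_j |B_j|$, together with $|A| \geq |B|$, should deliver the $d$-dimensional bound.

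The main obstacle is twofold. First, the inductive hypothesis requires $\dim(A_i + B_j) = d - 1$, which need not hold even when $\dim(A+B) = d$; I would address this by strengthening the inductive statement to allow arbitrary $\dim(X+Y) \leq d$, with the coefficient $d$ in the formula replaced by $\dim(X+Y)$, and then inducting on $\dim(X+Y)$ rather than on the ambient dimension. Second, extracting the full coefficient $d$ on $|B|$ (rather than merely $d - 1$, which is what a slice-wise induction naively produces) requires that the extra factor of $|B|$ come from the one-dimensional Cauchy--Davenport bound in the $v$-direction, combined with the $(d-1)$-dimensional bound on the slices. Balancing these contributions in the summation---and choosing $v$ to maximize the utility of the extremal slices, for instance by aligning the top layer of $A$ with the vertex of $B$ maximal in direction $v$---is the technical heart of the proof, and is where the convex-geometric nature of the problem comes into play.
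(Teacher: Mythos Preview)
The paper does not supply a proof of this lemma at all: it is listed in \S2 (Preliminaries) and attributed verbatim to Ruzsa \cite[Corollary~1.1]{Ru1994}. So there is nothing in the paper to compare your attempt against; the authors simply quote the result as a black box.

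As for your sketch on its own merits: the outline of inducting on the affine dimension and slicing by hyperplanes orthogonal to a generic direction is a natural first attempt, and the two obstacles you flag are exactly the real ones. The first you can indeed handle by proving the stronger statement with $d$ replaced by $\dim(A+B)$ throughout. The second is where your L-shape summation, as written, falls short: applying the $(d-1)$-dimensional hypothesis to each slice $A_1+B_j$ and $A_i+B_t$ and summing gives a main term of the shape $|A_1|\cdot t + (d-1)|B| + |A| + (d-1)|B_t|\cdot(s-1)$ minus error terms, and there is no a~priori reason this dominates $|A|+d|B|$ without further control on $|A_1|,|B_t|,s,t$. The remedy in Ruzsa's original argument is \emph{not} to slice both sets simultaneously but rather to peel off a single extreme point of $B$ (a vertex of its convex hull), exploit the fact that such a point sees $d$ independent edge-directions of the hull, and induct on $|B|$; this is closer in spirit to the proof of Freiman's Lemma (cf.\ \cite[Lemma~5.13]{TV2006}) than to a product-type decomposition. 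If you want to push your slicing approach through, you would need an additional convex-geometric input guaranteeing that a direction $v$ can be chosen so that the extremal slices $A_1$ and $B_t$ are singletons, which then makes the bookkeeping go through; but at that point you are essentially rediscovering the extreme-point argument.
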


In some instances, we will also use a more general lower bound for sumsets of arbitrary finite sets in $\mathbb{R}^d$. Thus, given any finite, non-empty sets $A,B \subseteq \mathbb{R}^d$, we have
\begin{equation} \label{usls}
 |A+B| \geq |A| + |B| - 1. \end{equation}

Lastly, in \S5, we will use a result of Grynkiewicz and Serra \cite[Theorem 1.3]{GS2010}. 

\begin{lemma} \label{gs}
Let $A, B \subseteq \mathbb{R}^2$ be finite, non-empty subsets, let $l = \mathbb{R} x_1$ be a line, let $r_1$ be the number of lines parallel to $l$ which intersect $A$, and let $r_2$ be the number of lines parallel to $l$ that intersect $B$. Then
\begin{equation}  \label{gs1}
 |A+B| \geq \bigg( \frac{|A|}{r_1} + \frac{|B|}{r_2} - 1 \bigg)  (r_1 + r_2 - 1). 
\end{equation}
\end{lemma}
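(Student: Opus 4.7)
The plan is to reduce to the one-dimensional Cauchy--Davenport inequality by slicing $A$ and $B$ along lines parallel to $l$, and then to argue by induction on $r_1 + r_2$. Let $\pi : \mathbb{R}^2 \to \mathbb{R}$ be a linear projection with kernel $l$, write $\pi(A) = \{y_1 < \cdots < y_{r_1}\}$ and $\pi(B) = \{z_1 < \cdots < z_{r_2}\}$, and for each $i,j$ set $A_i = A \cap \pi^{-1}(y_i)$, $B_j = B \cap \pi^{-1}(z_j)$, so that each $A_i,\, B_j$ lies on a line parallel to $l$ and $\sum_i |A_i| = |A|$, $\sum_j |B_j| = |B|$. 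The base case $r_1 = r_2 = 1$ reduces to the one-dimensional bound $|A + B| \ge |A| + |B| - 1$ on that single line, which matches the claimed inequality exactly.

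For the inductive step with $r_1 + r_2 \ge 3$, the key geometric observation is that the height $y_{r_1} + z_{r_2}$ strictly exceeds every other value of $y_i + z_j$, so $A_{r_1} + B_{r_2}$ contributes at least $|A_{r_1}| + |B_{r_2}| - 1$ points (by one-dimensional Cauchy--Davenport on its line) to a fibre of $A + B$ disjoint from both $A + (B \setminus B_{r_2})$ and $(A \setminus A_{r_1}) + B$. This yields the two inequalities
\[ |A + B| \ge |A + (B \setminus B_{r_2})| + |A_{r_1}| + |B_{r_2}| - 1 \qquad (r_2 \ge 2) \]
and its counterpart obtained by peeling off $A_{r_1}$ (when $r_1 \ge 2$). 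Applying the inductive hypothesis to the reduced set and comparing with the target bound, a direct algebraic expansion shows that the first inequality closes the gap whenever
\[ (|A_{r_1}| - |A|/r_1)(r_2 - 1) \ge (|B_{r_2}| - |B|/r_2)(r_1 - 1), \]
while the second closes it in the opposite regime; the factor $(r_1-1)/(r_2-1)$ appears naturally after collecting terms like $(r_1 + r_2 - 2)/(r_2 - 1) - (r_1 + r_2 - 1)/r_2$. These two conditions are complementary, and in each boundary case ($r_1 = 1$ or $r_2 = 1$) the relevant peeling is the only one available and the corresponding condition reduces to $0 \ge 0$, so one of the two reductions always succeeds.

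The main obstacle, and the reason the dichotomy is needed, is that a naive induction produces error terms depending on the extremal fibre sizes $|A_{r_1}|$, $|B_{r_2}|$, either of which can be much smaller than the averages $|A|/r_1$, $|B|/r_2$ appearing in the target. The complementary pair of inequalities above sidesteps this: whichever extremal fibre is \emph{too small} relative to its average, one peels off the other instead, and the inductive error is automatically non-negative. The rest of the argument is a routine but careful algebraic verification of the dichotomy, together with the usual bookkeeping to ensure that every reduced configuration still satisfies the hypotheses of the statement.
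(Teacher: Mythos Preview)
The paper does not prove this lemma: it is quoted verbatim as \cite[Theorem~1.3]{GS2010} (Grynkiewicz--Serra) and used as a black box in \S5. So there is no ``paper's own proof'' to compare against here.

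Your argument is correct and gives a clean self-contained proof. After fibring over the projection $\pi$ with kernel $l$, the top fibre $A_{r_1}+B_{r_2}$ of $A+B$ sits at height $y_{r_1}+z_{r_2}$, strictly above every height in $A+(B\setminus B_{r_2})$ and in $(A\setminus A_{r_1})+B$, so either peeling is legitimate. The algebra you sketch is right: writing $\alpha=|A_{r_1}|$, $\beta=|B_{r_2}|$, $a=|A|/r_1$, $b=|B|/r_2$, one finds that peeling $B_{r_2}$ and applying induction leaves a deficit of exactly
\[
(\alpha-a)(r_2-1)-(\beta-b)(r_1-1),
\]
while peeling $A_{r_1}$ leaves the negative of this quantity; hence one of the two is always non-negative. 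The boundary cases $r_1=1$ or $r_2=1$ force $\alpha=a$ (resp.\ $\beta=b$), so the only available peeling has zero deficit. This is essentially the inductive scheme Grynkiewicz and Serra use in their paper, so your proof is in line with the original source even though the present paper omits it.
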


\section{The structure theorem}

In this section, we will prove Theorem $\ref{str}$. We begin by moving from estimates on sums of dilates to bounds on sumsets. 

\begin{lemma} \label{sdss}
Let $c$ be a positive real number and let $d$ be a natural number. Further, let $A$ be a finite subset of $\mathbb{R}^d$ and $\mathscr{L} \in \textrm{GL}_d(\mathbb{R})$ be an invertible linear transformation. If 
\[ |A+\mathscr{L}(A)| \leq c|A|, \]
then
\begin{equation} \label{sum} |A+A|  \leq c^6 |A|. \end{equation}
\end{lemma}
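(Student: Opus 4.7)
The plan is to eliminate the linear transformation $\mathscr{L}$ entirely and reduce the claim to a standard Ruzsa-type inequality in the torsion-free group $\mathbb{R}^d$. Setting $B = \mathscr{L}(A)$, invertibility of $\mathscr{L}$ gives $|B| = |A|$, so the hypothesis reads $|A + B| \leq c|A|$. The key observation is that $x \mapsto -x$ is a bijection, so $|(-B) - A| = |-(A+B)| = |A+B| \leq c|A|$; this is the only place where $\mathscr{L}$ plays any role at all, and from here on the proof works in any torsion-free abelian group containing $A$ and $B$.

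The first step is to control the difference set $|A - A|$ via Ruzsa's triangle inequality \eqref{ru1} applied with the choice $U = -B$, $V = W = A$. Since $|U| = |B| = |A|$ and $|U - V| = |U - W| = |(-B) - A| \leq c|A|$, this yields $|A|\cdot|A - A| \leq c^2|A|^2$, hence $|A - A| \leq c^2|A|$. The second step is to bound $|A + A|$ in terms of $|A - A|$ via \eqref{ru2} with $U = V = A$, which gives $|A + A| \leq |A - A|^3/|A|^2 \leq c^6|A|$, which is \eqref{sum}.

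There is essentially no obstacle here: the content of the lemma is the recognition that $\mathscr{L}$ contributes nothing beyond preserving cardinality, so a small sum of linear transformations forces small doubling up to the polynomial loss inherent in Ruzsa's inequalities. This is precisely the mechanism which will let Freiman-type tools such as Lemma \ref{GrRu} be applied to $A$ itself in the subsequent proof of Theorem \ref{str}.
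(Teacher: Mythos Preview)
Your proof is correct and essentially identical to the paper's own argument: both apply \eqref{ru1} with the pair $\{A,-\mathscr{L}(A)\}$ to obtain $|A-A|\leq c^2|A|$, and then feed this into \eqref{ru2} to reach $|A+A|\leq c^6|A|$. The only cosmetic difference is which of the two sets you place in the $U$-slot of \eqref{ru1}; the paper takes $U=A$, $V=W=-\mathscr{L}(A)$ and invokes invertibility via $|\mathscr{L}(A-A)|=|A-A|$, whereas you take $U=-B$, $V=W=A$ and invoke it via $|B|=|A|$.
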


\begin{proof}

Fixing $c>0$, let $A$ be a finite subset of $\mathbb{R}^d$ such that $|A+\mathscr{L}(A)| \leq c|A|$. We apply $\eqref{ru1}$ with $U = A$, $V = -\mathscr{L}(A)$ and $W = -\mathscr{L}(A)$. Thus, we have
\[ |A| |-\mathscr{L}(A) - (-\mathscr{L}(A))| \ \leq \ |A-(-\mathscr{L}(A))| |A - (-\mathscr{L}(A))|, \]
which gives us
\[ |A||\mathscr{L}(A-A)| \ \leq \ |A+\mathscr{L}(A)|^2 \ \leq \ c^2 |A|^2.  \]
As $\mathscr{L}$ is invertible, we have $|\mathscr{L}(A-A)| = |A-A|$. Thus we deduce that
\[  |A-A| \ \leq \ c^2|A|.  \]
Using $\eqref{ru2}$ with $U,V = A$, we get
\[  |A+A| \ \leq \ \frac{|A-A|^3}{|A|^2} \ \leq \ c^6 |A|. \qedhere \] 
\end{proof}

Our next objective is to deduce Theorem $\ref{str}$ from $\eqref{sum}$. 

\begin{lemma} \label{fri}
Let $A$ be a finite subset of $\mathbb{R}^d$ with $|A| = n$ where $n$ is large enough. If
\begin{equation} \label{ti} |A+A| \leq c^6n, \end{equation}
for some $c > 0$, then there exist parallel lines $l_1, l_2, \dots, l_r$ in $\mathbb{R}^d$, and constants $0 < \sigma \leq 1/2$ and $C_1 > 0$ depending only on $c$ such that 
\[ |A \cap l_1| \geq \dots \geq  |A \cap l_r| \geq  |A \cap l_1|^{1/2} \geq C_1^{-1} n^{\sigma}. \]
and 
\[ |A\setminus (l_1 \cup l_2 \cup \dots \cup l_r)| < C_1 c^6 n^{1-\sigma}. \] 

\end{lemma}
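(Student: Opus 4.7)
My plan is to derive the conclusion by feeding the small-doubling hypothesis into the Freiman--Green--Ruzsa structure theorem (Lemma~\ref{GrRu}) and then pigeonholing on the longest side of the resulting progression. Since $\mathbb{R}^d$ is torsion free and $|A+A| \leq c^6 n$, the coset progression supplied by Lemma~\ref{GrRu} has trivial subgroup part, so $A$ is contained in a proper arithmetic progression
\[ P = \{ v_0 + u_1 v_1 + \cdots + u_s v_s : 0 \leq u_i < L_i \}, \]
of arithmetic dimension $s \leq S(c)$ and size $L = L_1 \cdots L_s \leq K n$, where $S(c)$ and $K = K(c)$ depend only on $c$. This is precisely what will force the final $\sigma$ and $C_1$ to depend only on $c$.

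After relabeling so that $L_1 \geq L_2 \geq \cdots \geq L_s$, one has $L_1 \geq L^{1/s} \geq n^{1/s}$. I would foliate $P$ into parallel affine lines along $v_1$: for each choice of $(u_2, \ldots, u_s)$ the corresponding ``row'' is a one-dimensional sub-progression of $L_1$ points on a line in direction $v_1$, and there are $L/L_1$ such rows. After passing, if necessary, to a Freiman-isomorphic copy of $P$ in $\mathbb{Z}^s$ so that the generators are linearly independent, each line parallel to $v_1$ meets $P$ in exactly $L_1$ points and the number of distinct such lines is $L/L_1 \leq Kn/L_1$. Ordering these lines by decreasing intersection with $A$ as $|A \cap l_1| \geq |A \cap l_2| \geq \cdots$, a simple averaging argument yields
\[ m := |A \cap l_1| \geq \frac{nL_1}{L} \geq \frac{L_1}{K} \geq \frac{n^{1/s}}{K}. \]

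I would then let $r$ be the largest index with $|A \cap l_r| \geq m^{1/2}$, so by construction $|A \cap l_r| \geq |A \cap l_1|^{1/2}$. The complement $A \setminus (l_1 \cup \cdots \cup l_r)$ lives on at most $L/L_1$ lines each carrying fewer than $m^{1/2}$ points of $A$, so using the trivial bound $m \leq L_1$,
\[ |A \setminus (l_1 \cup \cdots \cup l_r)| < \frac{L}{L_1} \cdot m^{1/2} \leq \frac{Kn}{L_1^{1/2}} \leq K n^{1 - 1/(2s)}. \]
Setting $\sigma := 1/(2s) \in (0, 1/2]$ then matches both targets: on the one hand $m^{1/2} \geq K^{-1/2} n^\sigma$, and on the other the complement is bounded by $K n^{1-\sigma}$. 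Choosing $C_1 := \max\{K^{1/2}, K/c^6\}$, which depends only on $c$, closes the argument.

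The only delicate point is the exponent calibration: the pigeonhole lower bound $m \geq n^{1/s}/K$ and the row-wise upper bound $m \leq L_1$ essentially force $\sigma = 1/(2s)$, since any smaller value would waste the gain in the displayed complement bound and any larger value would violate one of the two inequalities. The boundedness of $s$ in terms of $c$ furnished by Lemma~\ref{GrRu} is exactly what makes $\sigma$ a positive constant depending only on $c$, and the mild technical step of arranging linear independence of the generators (via a Freiman isomorphism into $\mathbb{Z}^s$) is where one must take a little care, but it introduces no quantitative loss.
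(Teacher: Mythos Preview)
Your overall strategy---Green--Ruzsa, then pigeonholing on the longest side and thresholding at $m^{1/2}$---matches the paper's. The gap is in the complement bound, specifically the step ``using the trivial bound $m\le L_1$''. You justify it by passing to a Freiman-isomorphic copy of $P$ in $\mathbb{Z}^s$ with independent generators, but the lemma demands parallel lines \emph{in $\mathbb{R}^d$}. The inverse of that Freiman isomorphism is the affine map $(u_1,\dots,u_s)\mapsto v_0+\sum_i u_iv_i$ from $\mathbb{R}^s$ to $\mathbb{R}^d$; whenever $v_1,\dots,v_s$ are linearly dependent in $\mathbb{R}^d$ (unavoidable once $s>d$, and Lemma~\ref{GrRu} gives no bound on $s$ in terms of $d$) this map can collapse distinct $\mathbb{Z}^s$-lines onto the same $\mathbb{R}^d$-line. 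After such a collapse the richest line $l_1$ in $\mathbb{R}^d$ may meet $P$ in far more than $L_1$ points, so $m\le L_1$ can genuinely fail and the chain $(L/L_1)\,m^{1/2}\le Kn/L_1^{1/2}$ breaks. Even if you run the entire argument in $\mathbb{Z}^s$ and only transfer at the end, the merging inflates $|A\cap l_1|$, and the required inequality $|A\cap l_r|\ge |A\cap l_1|^{1/2}$ no longer follows from your $\mathbb{Z}^s$-threshold $|A'\cap\ell_r|\ge m^{1/2}$.

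The paper avoids this issue by never using $m\le L_1$; instead it feeds the small-doubling hypothesis in a \emph{second} time. With $p_1=A\cap l_1$ and $B=A\setminus(l_1\cup\cdots\cup l_r)$, the sets $p_1+(A\cap l_j)$ for $j>r$ lie on pairwise distinct translates of $l_1$, so
\[
c^6 n\;\ge\;|A+A|\;\ge\;|p_1+B|\;\ge\;(k-r)\,|p_1|\;>\;|B|\,|p_1|^{1/2},
\]
using only $|B|<(k-r)|p_1|^{1/2}$. This yields $|B|<c^6 n/|p_1|^{1/2}\le C_1^{1/2}c^6 n^{1-1/(2s)}$ directly in $\mathbb{R}^d$, and incidentally explains the factor $c^6$ in the stated complement bound, which your pure counting argument does not naturally produce.
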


\begin{proof}

Let $A$ be a finite subset of $\mathbb{R}^d$ which satisfies $\eqref{ti}$. From the note following Lemma \ref{GrRu}, we deduce that $A$ is contained in a proper progression $P \subseteq \mathbb{R}^d$, of arithmetic dimension $s$ and size $C_1 n$, where $s$ and $C_1$ depend only on $c$. We write $P$ as 
\[  P = \{   v_0 + u_1 v_1 + \dots + u_s v_s \ | \ 0 \leq u_i < L_i \ (1 \leq i \leq s)  \},  \]
where $L_1 L_2 \dots L_s = L$ and $v_0, v_1, \dots, v_s$ are elements of $G$ such that all the sums in the progression are distinct.
\par

Without loss of generality, we suppose $L_1 = \sup \{L_1, \dots, L_s\}$. Note that as $P$ contains $A$, we must have $L_1 L_2 \dots L_s = L \geq n$, which further implies that $L_1 \geq  n^{1/s}$. We define the arithmetic progression $Q$ as
\[ Q = \{ u_1 v_1 \ | \ 0 \leq u_1 < L_1  \}. \]
We note that our progression $P$ can be seen as a collection of $L/L_1$ translates of the arithmetic progression $Q$. Because $P$ is proper, all of these translates are disjoint and thus we have \[ \frac{L}{L_1} \ \leq \ \frac{C_1 n}{n^{1/s}} \ \leq \ C_1 n^{1-1/s}. \]
Lastly, as $A$ is covered by disjoint translates of $Q$, we define $Q'$ to be the translate of $Q$ containing the most elements of $A$. By the pigeonhole principle, we find that $Q'$ contains at least
\[ \frac{n}{C_1 n^{1-1/s}} \ = \ \frac{1}{C_1} n^{1/s} \]
elements of $A$.
\par

Until now, we have shown that if our set $A$ has small doubling, then a significant portion of its elements are contained in a $1$-dimensional progression. Our next goal is to show that unless almost all of $A$ is similarly structured, $|A+A|$ grows faster than just linearly in $A$. 
\par

 We let $l$ be the line in $\mathbb{R}^d$ that contains the arithmetic progression $Q$. We begin by covering $A$ with translates of $l$. Thus we have 
\[ A \subseteq l_1 \cup  l_2 \cup \dots \cup l_{k}, \]
where $l_1, l_2, \dots, l_k$ are parallel lines. We write $p_i = A \cap l_i$. Without loss of generality, we can assume that 
\begin{equation} \label{tada}
|p_1| \geq |p_2| \geq \dots \geq |p_{k}| \ \text{such that} \ |p_1| \ \geq \ \frac{1}{C_1} n^{1/s}. 
\end{equation}
Let $r \in \{1, \dots, k \}$ be a natural number such that 
\[ |p_1| \geq \dots \geq|p_{r}| \geq |p_1|^{1/2} > |p_{r+1}| \geq \dots \geq |p_k|. \]
We define $B = A \setminus (l_1 \cup \dots \cup \l_{r} )$. Note that 
\[ |B| = \sum_{j=r+1}^{k} |p_j|  < (k-r) |p_1|^{1/2}, \]
and thus
\[ k - r > |B||p_1|^{-1/2}. \]
Further, we see that
\begin{align*} |A+A| \ 
& \geq \ |p_1 + B| \ = \ \sum_{j=r+1}^{k} |p_1 + p_j| \\ 
& \geq \ (k-r) |p_1| \ > \ |p_1|^{1/2} |B|. \end{align*}
We combine this with $\eqref{ti}$ and $\eqref{tada}$ to show that 
\[ |B| \ < \ |p_1|^{-1/2} |A+A| \ \leq \ C_1^{1/2} c^6 n^{1-1/2s}. \]
We replace $1/2s$ with $\sigma$, and $C_1^{1/2}$ with $C_1$ to get Lemma $\ref{fri}$. 
\end{proof}

We note that upon combining Lemma $\ref{sdss}$ and Lemma $\ref{fri}$, we can deduce Theorem $\ref{str}$. 
\par

\section{Proof of Theorem $\ref{main}$}

We will take ideas from the proof of Freiman's lemma \cite[section 1.14]{Fr1973} as given in \cite[Lemma 5.13]{TV2006} and modify them to prove our own result.
\par

Let $|A| = n$ and $q,s$ be co-prime integers such that $1 < qs$. Let $n$ be large enough and $|q\cdot A+ s\cdot A| < 2(|q|+|s| + 2d-2)n$. We note that $|q\cdot A+ s\cdot A| = |A + (s/q) \cdot A|$ and thus, define $\mathscr{L}$ to be the scalar matrix $(s/q) I_d$, where $I_d$ is the $d \times d$ identity matrix. As $\mathscr{L}$ lies in $\textrm{GL}_d(\mathbb{R})$, we apply Lemma $\ref{sdss}$ to get
\[ |A+A| \leq (2(|q| + |s| + 2d-2))^6|A|.  \]
\par

Our next step is to apply Lemma $\ref{fri}$ with $c = 2(|q| + |s| + 2d-2)$. Thus, we can find parallel lines $l_1, l_2, \dots, l_{r_1}$ and constants $0 < \sigma \leq 1/2$, and $C_1>0$ depending only on $q, s$ and $d$ such that 
\[ |A \cap l_1| \geq \dots \geq  |A \cap l_{r_1}| \geq  |A \cap l_1|^{1/2} \geq C_1^{-1} n^{\sigma}. \]
and 
\[ |A\setminus (l_1 \cup l_2 \cup \dots \cup l_{r_1})| < C_1 c^6 n^{1-\sigma}. \]
Note that there is a natural upper bound for $r_1$ in terms of $n$ as
\[ n \ \geq \ \sum_{i=1}^{r_1} |A \cap l_i| \ \geq \ r_1 C_1^{-1} n^{\sigma}. \]
Thus $r_1 \leq C_1 n^{1-\sigma}. $
We write 
\[B = A\setminus (l_1 \cup l_2 \cup \dots \cup l_{r_1}).\]
Note that we can cover $B$ with translates of $l_1$, say, $l_{r_1+ 1}, \dots, l_r$. As for each $r_1 < i \leq r$, the line $l_i$ must contain at least one element of $B$, we have
\[ r-r_1 \leq |B|  < C_1 c^6 n^{1-\sigma}. \]
This, together with the estimates on $r_1$, implies that
\begin{equation} \label{rup11}  r < C_2 n^{1-\sigma}, \end{equation}
where $C_2$ is some positive constant that only depends only on $C_1$ and $c$. Thus we have proved that if $|q \cdot A + s \cdot A | < 2(|q| + |s| + 2d-2)|A|$, then $A$ can be written as 
\[ A = (l_1 \cup l_2 \cup \dots \cup l_r) \cap A, \] 
where $l_1, l_2, \dots, l_r$ are $r$ parallel lines in $\mathbb{R}^d$ and $r < C_2 |A|^{1-\sigma}$ for some constants $C_2 > 0$ and $0 < \sigma \leq 1/2$. 
\par

For ease of notation, we define $K_{q,s,d}$ as a positive constant depending only on $q, s$ and $d$ such that
\[ K_{q,s,d} := d(d+1)C_{q,s} + d(d+1) + C_{q,s}, \]
where $C_{q,s}$ is the constant referenced in Lemma $\ref{BalSh1}$.  

\begin{Proposition} \label{lines}
Let $d$ be a natural number and $q,s$ be co-prime integers such that $1 < qs$. Further, let $l_1, l_2, \dots, l_r$ be $r$ parallel lines in $\mathbb{R}^d$. Suppose $A$ is a finite $d$-dimensional subset of $\mathbb{R}^d$ such that 
\[ A \subseteq l_1 \cup l_2 \cup \dots \cup l_r . \]
Then we have 
\[  |q \cdot A + s \cdot A | \geq  (|q|+|s| + 2d-2)|A|  -  K r,\]
where $K = K_{q,s,d}$.
\end{Proposition}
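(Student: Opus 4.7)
The plan is to proceed by induction on the dimension $d$. The base case $d = 1$ forces $r = 1$; after an invertible affine identification of $l_1$ with $\mathbb{R}$, Lemma \ref{BalSh1} delivers $|q \cdot A + s \cdot A| \geq (|q|+|s|)|A| - C_{q,s}$, which is precisely the bound since $2d - 2 = 0$ and $K_{q,s,1} \geq C_{q,s}$.

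For the inductive step with $d \geq 2$, replacing $A$ by $-A$ if necessary I may assume $q, s > 0$, and after an invertible affine change of coordinates I may take all $l_i$ parallel to the $e_d$-axis. Write $p_i \in \mathbb{R}^{d-1}$ for the first $d-1$ coordinates of $l_i$, set $B = \{p_1, \ldots, p_r\}$, and let $T_i \subset \mathbb{R}$ be the set of last coordinates of $A \cap l_i$, so $|A| = \sum_i |T_i|$ and $B$ is $(d-1)$-dimensional. The sumset $q \cdot A + s \cdot A$ projects to $q \cdot B + s \cdot B$ under the obvious map $\mathbb{R}^d \to \mathbb{R}^{d-1}$, so
\[
|q \cdot A + s \cdot A| = \sum_{m \in q\cdot B + s\cdot B} N_m,
\]
where $N_m$ denotes the size of the fiber of $q \cdot A + s \cdot A$ above $m$. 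For the $r$ diagonal values $m = (q+s)p_k$ (distinct because the $p_k$ are distinct and $q+s \neq 0$), Lemma \ref{BalSh1} yields $N_m \geq (|q|+|s|)|T_k| - C_{q,s}$; summed, this contributes $(|q|+|s|)|A| - C_{q,s} r$. For off-diagonal $m$ coming from a pair $(i,j)$ with $i \neq j$, the $1$-dimensional sumset bound applied to $q \cdot T_i + s \cdot T_j$ gives $N_m \geq |T_i| + |T_j| - 1$. It therefore suffices to show that an appropriate selection of off-diagonal fibers contributes $(2d-2)|A| - O_{q,s,d}(r)$ in total.

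I would perform a secondary induction on $r$ to extract this off-diagonal contribution. Choose a generic linear functional $\phi$ on $\mathbb{R}^{d-1}$ injective on $B$ and order the indices so that $\phi(p_1) < \cdots < \phi(p_r)$. Set $A' = A \setminus (A \cap l_r)$ and $B' = B \setminus \{p_r\}$. The proposition applied to $A'$—at the current dimension $d$ if $A'$ is still $d$-dimensional, or at dimension $d - 1$ via the primary hypothesis if $A'$ has dropped a dimension—bounds $|q \cdot A' + s \cdot A'|$. By the $\phi$-maximality of $p_r$, the line $(q+s)p_r + \mathbb{R}e_d$ is not among those of $q \cdot B' + s \cdot B'$, so it contributes at least $(|q|+|s|)|T_r| - C_{q,s}$ elements of $q \cdot A + s \cdot A$ not already in $q \cdot A' + s \cdot A'$. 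To close the induction I must produce $2d - 2$ further mixed lines of the form $qp_r + sp_j + \mathbb{R}e_d$ or $qp_i + sp_r + \mathbb{R}e_d$ (with $i,j < r$) that are disjoint from $q \cdot B' + s \cdot B'$, each contributing at least $|T_r| + |T_\bullet| - 1$ through the $1$-dimensional fiber bound.

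The main obstacle is precisely producing those $2d - 2$ new mixed lines. Only one is automatic from the $\phi$-extremality of $p_r$, so one must exploit the full $(d-1)$-dimensional structure of $B$: fix $d-1$ generic linear functionals on $\mathbb{R}^{d-1}$ and record the extremal points of $B$ under each, furnishing $2(d-1) = 2d-2$ extremal directions and correspondingly the required $2d-2$ new mixed lines. Lemma \ref{ruzsa} applied inside $\mathbb{R}^{d-1}$ quantifies how many collisions can occur between these mixed lines and $q \cdot B' + s \cdot B'$, and all such collisions are absorbed into the error term at cost $O_{q,s,d}(r)$; the $d(d+1)$ prefactor appearing in $K_{q,s,d}$ reflects the $d(d+1)/2$-type error from Lemma \ref{ruzsa} being invoked twice—once for the $qp_r + sp_j$ family and once for the $qp_i + sp_r$ family.
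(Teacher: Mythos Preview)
Your overall architecture matches the paper's: a double induction on $d$ and $r$, removing one line at a time, splitting into the cases where $A' = A \setminus l_r$ stays $d$-dimensional or drops to $(d-1)$-dimensional, and recovering the $(2d-2)|T_r|$ deficit from ``mixed'' lines through $p_r$. The base case and the fiber decomposition are fine.

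The genuine gap is in the paragraph you flag as ``the main obstacle.'' Your proposed fix---choosing $d-1$ generic linear functionals on $\mathbb{R}^{d-1}$ and recording the extremal point of $B$ under each---does not do what you need. Each functional hands you a \emph{different} extremal point of $B$, whereas the induction requires you to remove a \emph{single} point $p_r$ and then exhibit $2d-2$ mixed lines of the form $qp_r + sp_j$ or $qp_i + sp_r$ that avoid $q\cdot B' + s\cdot B'$. One functional gives at most two such lines; switching functionals changes $p_r$ and breaks the induction. The paper resolves this with a convex-hull argument: take $p_r$ to be a vertex of $\mathrm{conv}(B)$. If $B' = B\setminus\{p_r\}$ is still $(d-1)$-dimensional, then $p_r \notin \mathrm{conv}(B')$ and there are $d-1$ points $y_1,\dots,y_{d-1}\in B'$ (the neighbours of $p_r$ along edges of $\mathrm{conv}(B)$) such that each open segment $(p_r,y_i)$ misses $\mathrm{conv}(B')$. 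Since $q,s>0$, both $\frac{qp_r+sy_i}{q+s}$ and $\frac{sp_r+qy_i}{q+s}$ lie on that open segment, hence outside $(q\cdot B' + s\cdot B')/(q+s)$. This yields exactly the $2(d-1)$ new mixed lines, plus the diagonal line $(q+s)p_r$, all disjoint from $q\cdot A' + s\cdot A'$.

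You also misdescribe the role of Lemma~\ref{ruzsa}. It is not used to ``quantify collisions'' between mixed lines and $q\cdot B' + s\cdot B'$ in the $d$-dimensional case. It is invoked only in the second case, when $A'$ has dropped to dimension $d-1$: there the inductive bound on $|q\cdot A' + s\cdot A'|$ loses a factor of $2|A'|$ in the main term, and Lemma~\ref{ruzsa} is applied \emph{directly} to $s\cdot A' + q\cdot p_r$ and $q\cdot A' + s\cdot p_r$ (which are now $(d-1)$-dimensional sumsets disjoint from everything else) to recover $2|A'| + 2(d-1)|p_r| - d(d+1)$. That is the source of the $d(d+1)$ in $K_{q,s,d}$, not a collision count.
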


We note that by combining the preceding discussion with $\eqref{rup11}$ and Proposition $\ref{lines}$, we can deduce Theorem $\ref{main}$ for $\delta = \sigma$.

\begin{proof}[Proof of Proposition $\ref{lines}$]

We will prove our proposition by double induction, first on $d$, that is, the dimension of $A$ and then on $r$, that is, the number of lines that make up $A$. For any choice of $d$ and $r$, we have $r \geq d$ as $A$ is a $d$-dimensional set. Let $P(d,r)$ be the statement of Proposition $\ref{lines}$ for $d$-dimensional sets $A$ which can be covered by $r$ parallel lines. Our base cases will be $P(1,r)$ for all $r \geq 1$ and $P(d,d)$ for all $d \geq 1$. In our inductive step, we will prove that if $P(d-1,r-1)$ and $P(d,r-1)$ are true, then $P(d,r)$ holds. We will thus conclude that $P(d,r)$ holds for all $r,d \in \mathbb{N}$ such that $r \geq d$.
\par

For ease of notation, let $p_i = A \cap l_i$ for all $1 \leq i \leq r$. We note that Lemma $\ref{BalSh1}$ implies $P(1,r)$ for all $r \geq 1$. Thus our remaining base case is $P(d,d)$ for all $d \geq 1$. This is easy to show since in this case, the sets $q\cdot p_i + s\cdot  p_j$ are disjoint for all $1 \leq i \leq j \leq r$.  
Hence for our $d$-dimensional set $A$, we have
\[  | q\cdot A + s\cdot A | = \sum_{\substack{i,j = 1 \\ i \neq j}}^{d}  |q\cdot p_i + s\cdot p_j| + \sum_{i=1}^{d} |q\cdot p_i+s\cdot p_i| . \] 
We use $\eqref{usls}$ to estimate $|q\cdot p_i + s\cdot p_j|$ and we use $\eqref{bs13}$ to estimate $|q\cdot p_i+s\cdot p_i|$. Thus, we get

\begin{align*}
  | q\cdot A + s\cdot A | & \geq \sum_{\substack{i,j = 1 \\ i \neq j}}^{d} (|p_i| + |p_j| -1) +  \sum_{i=1}^{d}( (|q|+|s|)|p_i| - C_{q,s})\\ 
& \geq (|q| + |s| + 2(d-1)) \sum_{i=1}^{d} |p_i| - C_{q,s} (d^2 + d) \\ 
& \geq  (|q|+|s|+2d-2)|A| - Kd. 
\end{align*}
\par

We now proceed with the inductive step, that is, for any $r,d \in \mathbb{N}$ such that $r > d$, we assume that $P(d-1,r-1)$ and $P(d,r-1)$ are true, and then prove $P(d,r)$. Thus let $A$ be a finite, $d$-dimensional subset of $\mathbb{R}^d$, such that $A \subseteq (l_1 \cup \dots \cup l_r)$, where $l_1, \dots, l_r$ are parallel. As all the $l_i$'s are parallel, let ${H}$ be the hyperplane orthogonal to $l_1$ and let $x_i$ denote the point of intersection of $H$ and $l_i$ for each $i$. We write $X = \{ x_1,\dots,x_{r}\}.$ Without loss of generality, we can assume that $x_{r}$ is an extreme point of $X$, that is, it is a vertex on the convex hull $C$ of $X$. We define $A' = A \setminus l_{r}$, $X' = X\setminus \{x_{r}\}$ and $C'$ to be the convex hull of $X'$. Note that dimension of $A'$ in $\mathbb{R}^d$ is at least $d-1$. Our proof divides into two cases now, depending on the dimension of $A'$.\par
We first consider the case when $A'$ is $d$-dimensional. This implies that $X'$ is $(d-1)$-dimensional, and since $x_{r}$ lies outside of $C'$, there exist distinct points $y_1,\dots,y_{d-1}$ in $X'$ such that for all $1\leq i \leq d-1$, the line segment joining $x_{r}$ and $y_i$ lies outside $C'$. In particular, for each $1\leq i \leq d-1$, the points
\[ \frac{s\cdot x_{r} + q\cdot y_i}{q+s} \ \text{and} \ \frac{q\cdot x_{r} + s\cdot y_i}{q+s}  \]
lie outside of $(q\cdot X' + s\cdot X')/(q+s)$. For each $y_i$, let the corresponding line in $A$ containing $y_i$ be $m_i$. Then this implies that the two lines
\[s\cdot  l_{r} + q\cdot m_i \ \text{and} \ q\cdot l_{r} + s\cdot m_i  \]
do not intersect $q\cdot A' + s\cdot A'$. Thus, we get $2d-1$ distinct lines 
\begin{equation} \label{skl} s\cdot l_{r} + q\cdot  l_{r}, s\cdot l_{r} + q\cdot  m_1, \dots, s\cdot l_{r} + q\cdot m_{d-1}, q\cdot l_{r} + s\cdot m_1, \dots,  q\cdot l_{r} + s\cdot m_{d-1}, \end{equation}
which do not intersect $q\cdot A' + s\cdot A'$. 
By $P(d,r-1)$, we have that 
\begin{equation}  \label{c1} |q\cdot A' + s\cdot A'| \geq (|q|+|s|+2d-2)|A'| - K(r-1), \end{equation}
where $K = K_{q,s,d}$. Moreover, by $\eqref{bs13}$, we have 
\begin{equation} \label{c2} |q\cdot p_{r} + s\cdot p_{r}| \geq (|q|+|s|)|p_{r}| - C_{q,s}. \end{equation}
Lastly, for each $1 \leq i \leq d-1$, we have the trivial bound
\[ | s\cdot p_{r} + q\cdot m_i| + |q\cdot p_{r} +s\cdot m_i| \geq 2|p_{r}|.   \]
Summing the above for all $1 \leq i \leq d-1$, we get
\begin{equation} \label{c3} \sum_{i=1}^{d-1} ( |s\cdot p_{r} + q\cdot m_{i}| + |q\cdot  p_{r} + s\cdot m_{i}| ) \geq 2(d-1)|p_r|.  \end{equation}
Combining $\eqref{c1}$, $\eqref{c2}$ and $\eqref{c3}$ with the fact that  the $2d-1$ lines mentioned in $\eqref{skl}$ do not intersect  $q\cdot A' + s\cdot A'$, we get that
\begin{align*}  |q\cdot A+s\cdot A| \ &  \geq   (|q|+|s|+2d-2)|A'|  + (|q|+|s| + 2d-2)|p_{r}| -  K(r-1)- C_{q,s} \\
& > (|q|+|s|+2d-2)|A| - Kr. \end{align*}
Hence when $A'$ is $d$-dimensional, Proposition $\ref{lines}$ holds.
\par 

Our second case is when $A'$ is $(d-1)$-dimensional. In this case, we note that as $A$ is $d$-dimensional, $l_{r}$ can not intersect the affine subspace generated by $A'$, which means that $q\cdot A' + s\cdot A'$, $s\cdot A' + q\cdot p_{r}$, $q\cdot A' +s\cdot  p_{r}$ and $q\cdot p_{r} + s\cdot p_{r}$ are pairwise disjoint sets. We claim that
\begin{equation} \label{cl}
 |s\cdot A' + q\cdot p_{r}| + |q\cdot A' +s\cdot  p_{r}| \geq 2|A'|+ 2(d-1)|p_r| - d(d+1). \end{equation}
We now prove our claim. We first assume that $|A'| \geq |p_r|$. In this subcase, we use Lemma $\ref{ruzsa}$ which implies that
\[     |s\cdot A' + q\cdot p_r| \geq |A'| + (d-1)|p_r| - d(d+1)/2,  \]
and
\[    |q\cdot  A' + s\cdot p_r| \geq |A'| + (d-1)|p_r| - d(d+1)/2.  \]
Combining these two estimates, we get $\eqref{cl}$.
\par

Thus, we now assume that $|p_r| > |A'|$. As for all $1\leq i < j \leq r-1$, the lines $s\cdot l_i + q\cdot p_{r}$, $s\cdot l_j + q\cdot p_{r}$, $q\cdot l_i +s\cdot  p_{r}$ and $q\cdot l_j + s\cdot p_{r}$ are pairwise disjoint, we have the following decomposition. 
\begin{align} \label{f1}
 |s\cdot A' + q\cdot p_{r}| + |q\cdot A' +s\cdot  p_{r}|   & = \sum_{i =1}^{r-1} |s\cdot p_i + q\cdot p_{r}| + \sum_{i =1}^{r-1} |q\cdot p_i + s\cdot p_{r}|  \nonumber  \\
& \geq  \sum_{i=1}^{r-1} (2|p_{r}| + 2|p_i| - 2)   \nonumber  \\ 
& =   2|A'| +  (2r-2)|p_{r}|  - 2(r-1) .  
 \end{align} 
 Note that as $A$ is $d$-dimensional, we must have $r \geq d$. If $d \leq r \leq d+1$, we have
\[     (2r-2) |p_r| - 2(r-1) \geq (2d-2) |p_r| - 2d.   \]
If $r > d+1$, then we observe that as $A'$ is covered by $r-1$ lines, with each line containing at least one element of $A'$, we have $|p_r| > |A'| \geq r-1$. Using this, we show that
\begin{align*} 
  (2r-2) |p_r| - 2(r-1) \ & \geq \ (2d-2)|p_r| + 2|p_r| - 2(r-1) \\
&  > \  (2d-2) |p_r| .
\end{align*}
In either case, we have
\[ (2r-2) |p_r| - 2(r-1) \geq (2d-2) |p_r| - 2d,        \] 
which, together with $\eqref{f1}$, implies that
\[     |s\cdot A' + q\cdot p_{r}| + |q\cdot A' +s\cdot  p_{r}| \geq 2|A'|+ 2(d-1)|p_r| - 2d,  \]
that is, $\eqref{cl}$ holds.
\par

Thus when $A'$ is $(d-1)$-dimensional, we have shown that $\eqref{cl}$ holds.  By $P(d-1,r-1)$, we deduce that
\[ |q\cdot A' + s\cdot A'| \geq (|q|+|s|+2d-4)|A'| - K_{q,s,d-1}(r-1).\]
From our definition of $K_{q,s,d}$, we note that $ K_{q,s,d-1} \leq  K_{q,s,d} = K$, and thus, we have
\[ |q\cdot A' + s\cdot A'| \geq (|q|+|s|+2d-4)|A'| - K(r-1).\]
Combining this with $\eqref{c2}$ and $\eqref{cl}$, we get that
\begin{align*}
 |q\cdot A + s\cdot A| & \geq (|q|+|s|+2d-2)(|A'| + |p_r|) - K(r-1) - d(d+1) - C_{q,s} \\
& \geq (|q|+|s|+2d-2)|A| - Kr.
\end{align*}
\end{proof}


\section{Proof of Theorem \ref{rtn}}

We begin this section with a preliminary lemma on sums of linear transformations of one-dimensional sets. 

\begin{lemma} \label{lin}

Let $\mathscr{L} \in \textrm{GL}_{2}( \mathbb{R})$ be a linear transformation such that $\mathscr{L}$ has no real eigenvalues. Furthermore, let $l_1$ and $l_2$ be two parallel lines in $\mathbb{R}^2$. Then for all finite subsets $A_1 \subseteq l_1$ and $A_2 \subseteq l_2$, we have
\[ |A_1 + \mathscr{L}(A_2)| = |A_1||A_2|. \]  
\end{lemma}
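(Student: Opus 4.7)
The plan is to show that the addition map
\[
\Phi : A_1 \times A_2 \longrightarrow A_1 + \mathscr{L}(A_2), \qquad (a_1, a_2) \longmapsto a_1 + \mathscr{L}(a_2),
\]
is injective; once established, this immediately forces $|A_1 + \mathscr{L}(A_2)| = |A_1|\,|A_2|$, since the reverse inequality is trivial.

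First I would fix a nonzero direction vector $v \in \mathbb{R}^2$ spanning the common direction of the parallel lines $l_1$ and $l_2$. Because $A_i \subseteq l_i$, every difference of two elements of $A_i$ is a scalar multiple of $v$. Now suppose towards a contradiction that $\Phi(a_1, a_2) = \Phi(a_1', a_2')$ for two distinct pairs in $A_1 \times A_2$. Rearranging gives
\[
a_1 - a_1' \;=\; \mathscr{L}(a_2' - a_2),
\]
and writing $a_1 - a_1' = \alpha v$ and $a_2' - a_2 = \beta v$ with $\alpha, \beta \in \mathbb{R}$, this becomes the scalar identity $\alpha v = \beta \mathscr{L}(v)$.

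The crucial step is then to invoke the eigenvalue hypothesis. If $\beta \neq 0$, the equation reads $\mathscr{L}(v) = (\alpha/\beta) v$, exhibiting $v$ as a real eigenvector of $\mathscr{L}$ with real eigenvalue $\alpha/\beta$, which contradicts the assumption that $\mathscr{L}$ has no real eigenvalues. Hence $\beta = 0$, giving $a_2 = a_2'$, and then $\alpha v = 0$ together with $v \neq 0$ gives $\alpha = 0$, so $a_1 = a_1'$. Thus the two pairs coincide, proving injectivity of $\Phi$ and hence the lemma.

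There is no real obstacle here: the entire argument is a one-line linear-algebra observation built on the fact that parallel lines share a one-dimensional direction space, which $\mathscr{L}$ fails to stabilise precisely because it has no real eigenvalues. The only care needed is to handle the degenerate case $\beta = 0$ separately before dividing.
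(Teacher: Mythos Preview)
Your proof is correct and follows essentially the same approach as the paper: both argue that a collision $a_1 + \mathscr{L}(a_2) = a_1' + \mathscr{L}(a_2')$ would force the common direction vector of the parallel lines to be a real eigenvector of $\mathscr{L}$, contradicting the hypothesis. Your handling of the degenerate case $\beta = 0$ is slightly more explicit than the paper's, but the arguments are otherwise identical.
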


\begin{proof}
Let $a_1, a_3 \in A_1$  and $a_2, a_4 \in A_2$ satisfy
\[ a_1 + \mathscr{L} (a_2) = a_3 + \mathscr{L}(a_4). \]
Rearranging the above, we get that
 \[ a_1 - a_3  =  \mathscr{L}(a_4) - \mathscr{L} (a_2) = \mathscr{L} (a_4 - a_2). \]
We observe that if $a_1 - a_3$ is a non-zero vector, then $a_1 - a_3 = \lambda_1 \cdot u$ and $a_4 - a_2 = \lambda_2 \cdot u$ where $u$ is the unit vector parallel to $l_1$, and $\lambda_1$ and $\lambda_2$ are suitably chosen non-zero real numbers. Thus we have
\[\lambda_1 \cdot u = \mathscr{L} (\lambda_2 \cdot u) = \lambda_2 \cdot \mathscr{L} (u).  \] 
This implies that
\[ \mathscr{L} (u) = (\lambda_2^{-1}\lambda_1)\cdot u,  \] 
which contradicts the hypothesis that $\mathscr{L}$ has no real eigenvalues. Thus, $a_1 = a_3$, and consequently, $a_2 = a_4$. Hence, we see that all pair wise sums of the form $ a_1 + \mathscr{L} (a_2)$, with $a_1 \in A_1$ and $a_2 \in A_2$, are distinct. This implies that
\[ | A_1+ \mathscr{L}(A_2)| =  |A_1||\mathscr{L}(A_2)|  =  |A_1||A_2| . \qedhere  \] 
\end{proof}

We now prove another structure theorem which classifies sets that have a small $A + \mathscr{L}(A)$, where $\mathscr{L} \in \textrm{GL}_{2}( \mathbb{R})$ does not have real eigenvalues. 

\begin{theorem} \label{costr}
Let $\mathscr{L} \in \textrm{GL}_{2}( \mathbb{R})$ be a linear transformation such that $\mathscr{L}$ has no real eigenvalues. Furthermore, let $C > 1$ be a constant and $A$ be a finite subset of $\mathbb{R}^2$ such that
\begin{equation} \label{hyp1}
 |A + \mathscr{L}(A)|  < C|A|,
\end{equation} 
and $|A|$ is large enough. Then there is a partition $A=S\cup B$ such that the following implications hold. 

\begin{enumerate}

\item \label{itm1} There exist $r_1$ parallel lines $l_1,\dots, l_{r_1}$ such that \[ S = (l_1 \cup \dots \cup l_{r_1}) \cap A, \ \text{and} \ |B| \leq C_1 (2C)^6 {|A|}^{1-\sigma},  \]
where $C_1 > 0$ and $0 < \sigma \leq 1/2$ are constants depending only on $C$.
\item \label{itm2} We have \[ (2C)^{1/2} |A|^{1/2} > |A \cap l_1| \geq \dots \geq  |A \cap l_{r_1}| \geq  |A \cap l_1|^{1/2} \geq C_1^{-1} {|A|}^{\sigma}. \] 
\item \label{itm3} We have \[ \frac{1}{4C^{1/2}} |A|^{1/2} \leq r_1 \leq C_1 |A|^{1-\sigma}. \] 
\item \label{itm4} There exist $r_2$ parallel lines $m_1, \dots, m_{r_2}$ such that $l_1$ and $m_1$ are not parallel, and 
\[ S = (m_1 \cup \dots \cup m_{r_2}) \cap S \ \text{and} \ (8C)^{-1} r_1 \leq r_2 \leq 8C r_1 . \]
\item \label{itm5} We have \[ |A + \mathscr{L}(A)| \geq |A| \bigg( \frac{r_2}{r_1} + \frac{r_1}{r_2} + 2 \bigg)  - O(|A|^{1-\sigma}).  \] 

\end{enumerate}

\end{theorem}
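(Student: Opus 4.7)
The plan is to derive items (1)--(5) in three logical layers: (a) use Theorem \ref{str} to extract the primary parallel-line skeleton of $A$, giving items (1), (2) and (3); (b) apply the Grynkiewicz--Serra inequality (Lemma \ref{gs}) to the structured piece $S$ and to $\mathscr{L}(S)$ to exhibit the transverse line family, establishing item (4); and (c) unpack the same inequality to read off the sumset lower bound in item (5). A key preliminary observation, used throughout, is that Lemma \ref{lin} applied with $l_1 = l_2 = \ell$ for any line $\ell \subset \mathbb{R}^2$ gives $|(A \cap \ell) + \mathscr{L}(A \cap \ell)| = |A \cap \ell|^2$, which combined with $|A + \mathscr{L}(A)| < C|A|$ forces $|A \cap \ell| < (C|A|)^{1/2}$ on every line $\ell \subset \mathbb{R}^2$.

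For layer (a), I apply Theorem \ref{str} to $A$ with constant $c = C$ to obtain parallel lines $l_1, \ldots, l_{r_1}$ in some direction $u$, the partition $A = S \cup B$ with $|B| \leq C_1 C^6 |A|^{1-\sigma}$, and the lower bound $|A \cap l_{r_1}| \geq C_1^{-1}|A|^{\sigma}$. Item (1) is immediate. For item (2), the preliminary observation gives $|A \cap l_i| < (C|A|)^{1/2} < (2C|A|)^{1/2}$. For item (3), $r_1 \leq C_1|A|^{1-\sigma}$ comes from $r_1 \cdot C_1^{-1}|A|^{\sigma} \leq |S| \leq |A|$, and the lower bound $r_1 \geq |A|^{1/2}/(4C^{1/2})$ follows from $|S| \geq |A|/2$ (for $|A|$ large) together with the uniform per-line bound, via $r_1 \geq |S|/(C|A|)^{1/2}$.

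For layer (b), set $m := \mathscr{L}^{-1}(u)$, which is not parallel to $u$ since $\mathscr{L}$ has no real eigenvalues. Let $r_2$ be the number of direction-$m$ lines meeting $S$, and let $m_1, \ldots, m_{r_2}$ enumerate them. Since $\mathscr{L}$ bijectively sends direction-$m$ lines to direction-$u$ lines, the number of direction-$u$ lines meeting $\mathscr{L}(S)$ is exactly $r_2$. Lemma \ref{gs} applied to $S$ and $\mathscr{L}(S)$ with a line in direction $u$ therefore yields
\[
|A + \mathscr{L}(A)| \;\geq\; |S + \mathscr{L}(S)| \;\geq\; \Bigl(\tfrac{|S|}{r_1} + \tfrac{|S|}{r_2} - 1\Bigr)(r_1 + r_2 - 1),
\]
and expanding the right-hand side gives $|S|\bigl(2 + \tfrac{r_2}{r_1} + \tfrac{r_1}{r_2}\bigr) - \tfrac{|S|}{r_1} - \tfrac{|S|}{r_2} - (r_1 + r_2) + 1$. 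The per-line bound forces $|S|/r_i \leq (C|A|)^{1/2} = O(|A|^{1/2})$; comparing the inequality with $C|A|$ while using $|S| \geq |A|/2$ and $|S|/r_1 \geq C_1^{-1}|A|^{\sigma}$ first produces $r_2 = O(|A|^{1-\sigma})$, hence $r_1 + r_2 = O(|A|^{1-\sigma})$. A second pass then yields $\tfrac{r_2}{r_1} + \tfrac{r_1}{r_2} < 2C - 2 + o(1)$, and a quadratic-inequality argument places the ratio $r_2/r_1$ in $[(2C-2)^{-1}, 2C-2] \subset [(8C)^{-1}, 8C]$, proving item (4). For item (5), since the bracket $2 + r_2/r_1 + r_1/r_2$ is $O(1)$ and $|S| = |A| - O(|A|^{1-\sigma})$, the main term $|S|(2 + r_2/r_1 + r_1/r_2)$ equals $|A|(2 + r_2/r_1 + r_1/r_2) - O(|A|^{1-\sigma})$, while the three explicit error terms are each $O(|A|^{1-\sigma})$, completing the bound.

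The main obstacle is choosing the right input to Lemma \ref{gs}: applying it naively to $A$ and $\mathscr{L}(A)$ would force tracking discrepancies of size up to $|B|$ between the line counts for $A$ and those for $S$ in each of the two directions, and when $\sigma < 1/2$ these discrepancies blow up the error in item (5) beyond the stated $O(|A|^{1-\sigma})$. Applying Lemma \ref{gs} instead to the structured subset $S$ (whose direction-$u$ and direction-$m$ line counts are exactly $r_1$ and $r_2$ by definition) is what closes the argument, and Lemma \ref{lin} ensures that the passage from $A$ to $S$ retains the lower bounds on $r_1$ and $r_2$ needed in item (3).
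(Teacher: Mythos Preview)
Your proposal is correct and follows essentially the same approach as the paper: apply Theorem~\ref{str} to obtain the primary line family, use Lemma~\ref{lin} to cap each $|A\cap l_i|$, and then feed $S$ and $\mathscr{L}(S)$ into the Grynkiewicz--Serra inequality (Lemma~\ref{gs}) along the direction of $l_1$, reading off items~(4) and~(5) from the expanded bound. The only cosmetic differences are that the paper invokes Theorem~\ref{str} with $c=2C$ rather than $c=C$, defines $r_2$ by decomposing $\mathscr{L}(S)$ into $l_1$-translates and then pulling back via $\mathscr{L}^{-1}$ (equivalent to your direct definition via the direction $\mathscr{L}^{-1}(u)$), and bounds $|S|/r_2$ via the cruder estimate $r_2 \geq |S|/r_1$ rather than your per-line cap; your quadratic argument also gives the slightly sharper interval $[(2C-2)^{-1},\,2C-2]$ for $r_2/r_1$ before relaxing to $[(8C)^{-1},\,8C]$.
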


We remark that Theorem $\ref{rtn}$ is a straightforward consequence of Theorem $\ref{costr}$. This can be seen by setting $C = 8$ and applying Theorem $\ref{costr}$. We combine implication $(\ref{itm5})$ from Theorem $\ref{costr}$ and the fact that 
\[ \frac{r_2}{r_1} + \frac{r_1}{r_2} \geq 2, \]
for all $r_1, r_2 > 0$, to get
\[ |A + \mathscr{L}(A)| \geq 4|A|  - O(|A|^{1-\sigma}). \] 
We set $\delta = \sigma$ to get Theorem $\ref{rtn}$. Thus it suffices to show that  Theorem $\ref{costr}$ is true.

\begin{proof}[Proof of Theorem $\ref{costr}$]

Let $|A| = n$, where $n$ is large enough and let $\mathscr{L} \in \textrm{GL}_{2}( \mathbb{R})$ be a linear transformation such that $\mathscr{L}$ has no real eigenvalues. We suppose that $|A+ \mathscr{L}(A)| \leq 2Cn $. 
\par

We now apply Theorem $\ref{str}$ with $c = 2C$. Thus we get parallel lines $l_1, l_2, \dots, l_{r_1}$ in $\mathbb{R}^2$, and constants $0 < \sigma \leq 1/2$ and $C_1 > 0$ depending only on $C$ such that 
\[ |A \cap l_1| \geq \dots \geq  |A \cap l_{r_1}| \geq  |A \cap l_1|^{1/2} \geq C_1^{-1} n^{\sigma}, \]
and 
\begin{equation} \label{con2}
|A\setminus (l_1 \cup l_2 \cup \dots \cup l_{r_1})| < C_1 (2C)^6 n^{1-\sigma}. 
\end{equation}
We set $S = A \cap  (l_1 \cup l_2 \cup \dots \cup l_{r_1})$ and $B = A \setminus S$. For ease of notation, we write $p_i = A \cap l_i$ for $1 \leq i \leq r_1$. If $|p_1| \geq (2C)^{1/2} n^{1/2}$, then by Lemma $\ref{lin}$, we have
\[ |A + \mathscr{L}(A)| \ \geq \ |p_1 + \mathscr{L}(p_1)| \ = \ |p_1|^2 \ \geq 2Cn , \]
which contradicts $\eqref{hyp1}$. Thus we must have $|p_1| < (2C)^{1/2} n^{1/2}$, and consequently, we prove $(\ref{itm1})$ and  $(\ref{itm2})$ in Theorem $\ref{costr}$.
\par

From $\eqref{con2}$, we deduce that
\[ r_1 |p_1| \ \geq \ \sum_{i=1}^{r_1} |p_i| \ = \ |S| \ = \ n - |A \setminus S| \ \geq \ n - C_1 (2C)^6 n^{1-\sigma} \ > \ n/2, \]
if $n$ is large enough. Hence
\begin{equation} \label{r1l} 
r_1 \ \geq \ \frac{1}{2} |p_1|^{-1} n \ \geq \ \frac{1}{4 C^{1/2}} n^{1/2}. 
\end{equation}
As in the proof of Theorem $\ref{main}$, there is a natural upper bound for $r_1$ in terms of $n$ as
\[ n \ \geq \ \sum_{i=1}^{r_1} |p_i| \ \geq \ r_1 C_1^{-1} n^{\sigma}. \]
Consequently, we get
\begin{equation} \label{r1u}
 r_1 \ \leq \ C_1 n^{1-\sigma}. 
\end{equation}
Thus we have proven $(\ref{itm3})$ in Theorem $\ref{costr}$. 
\par

We now divide $\mathscr{L}(S)$ into equivalence classes with respect to $l_1$, that is, we write
\begin{equation} \label{decom} 
 \mathscr{L}(S) \ = \ E_1 \cup E_2 \cup \dots \cup E_{r_2}, 
\end{equation}
where each $E_i$ lies in a unique translate of $l_1$, and $E_i \cap E_j = \emptyset$ for all $i \neq j$. As $\mathscr{L}$ does not have any real eigenvalues, $\mathscr{L}(p_i)$ is not parallel to $l_1$ for all $1 \leq i \leq r_1$. Thus each translate of $l_i$ can contain at most $r_1$ elements of $\mathscr{L}(S)$. This gives us
\[ r_1  r_2 \ \geq \ \sum_{i=1}^{r_2} |E_i| \ = \ |\mathscr{L}(S)| \ = \ |S| \ \geq \ n/2.      \]
Combining this with $\eqref{r1u}$, we deduce that
\begin{equation} \label{r2l}
r_2 \ \geq \ \frac{1}{2} r_1^{-1} n \ \geq \ \frac{1}{2}  C_1^{-1} n^{-(1-\sigma)} n \ \geq \ \frac{1}{2} C_1^{-1} n^{\sigma}.
\end{equation}
Lastly, we can trivially bound $r_2$ above by $|S|$. Our set up to apply Lemma $\ref{gs}$ is now ready. We set $A = S$, $B = \mathscr{L}(S)$, $l = l_1$ in Lemma $\ref{gs}$.  Noting that as $\mathscr{L}$ is invertible, we have $|S| = |\mathscr{L}(S)|$. Thus, $\eqref{gs1}$ implies that
\begin{equation} \label{gs2}
|S +  \mathscr{L}(S)| \ \geq \ 2|S| + |S| \bigg( \frac{r_2}{r_1} + \frac{r_1}{r_2}\bigg)  -  |S| \bigg( \frac{1}{r_1} + \frac{1}{r_2} \bigg) - (r_1 + r_2 - 1). 
\end{equation}
Using the respective lower bounds $\eqref{r1l}$ and $\eqref{r2l}$ for $r_1$ and $r_2$, we show that
\begin{equation} \label{meh12} |S| \bigg( \frac{1}{r_1} + \frac{1}{r_2} \bigg ) \ \leq \ n \bigg( \frac{8}{n^{1/2}} + \frac{2 C_1}{n^{\sigma}} \bigg) \ = \ O(n^{1-\sigma}). \end{equation}
\par

We now prove that 
\begin{equation} \label{item4p} 
 (8C)^{-1} r_1 \leq r_2 \leq 8C r_1.
\end{equation}
If the above does not hold, we see that
\[  |S| \bigg( \frac{r_2}{r_1} + \frac{r_1}{r_2}\bigg) > \frac{n}{2} (8C)  = 4Cn. \] 
We combine this with  $\eqref{gs2}$, $\eqref{meh12}$ and the fact that $r_1 + r_2 \leq 2|S|$, to get
\[ |S +  \mathscr{L}(S)| > 2|S| + 4Cn - 2|S| -   O(n^{1-\sigma}) \geq 2Cn,  \] 
when $n$ is large enough. This contradicts $\eqref{hyp1}$ and thus, $\eqref{item4p}$ must hold. 
\par

We note that $\eqref{r1u}$ and $\eqref{item4p}$ give us
\[ r_1 + r_2 \leq  (8C + 1) r_1  \leq \  (8C + 1) C_1 n^{1-\sigma}.  \]

Combining the above with $\eqref{gs2}$ and $\eqref{meh12}$, we get
\begin{align*} 
|S +  \mathscr{L}(S)| \ 
& \geq \  2|S| + |S| \bigg( \frac{r_2}{r_1} + \frac{r_1}{r_2}\bigg)  -  |S| \bigg( \frac{1}{r_1} + \frac{1}{r_2} \bigg) - (r_1 + r_2 - 1)   \\ 
& \geq \ 2|S| + |S|\bigg( \frac{r_2}{r_1} + \frac{r_1}{r_2} \bigg) - O( n^{1-\sigma} ) -  O(  n^{1-\sigma}) \\ 
& \geq \ (|A| - |B|) \bigg( \frac{r_2}{r_1} + \frac{r_1}{r_2} + 2 \bigg) - O(n^{1-\sigma}). \\
\end{align*}
Furthermore, we use $\eqref{con2}$ and $\eqref{item4p}$ to show that
\[ |B| \bigg( \frac{r_2}{r_1} + \frac{r_1}{r_2} + 2 \bigg) \leq  (16C + 2)  C_1 (2C)^6 n^{1-\sigma}. \] 
Thus, we have
\[|S +  \mathscr{L}(S)| \geq  |A| \bigg( \frac{r_2}{r_1} + \frac{r_1}{r_2} + 2 \bigg) - O(|A|^{1-\sigma}),\] 
and consequently, $(\ref{itm5})$ in Theorem $\ref{costr}$ holds. 
\par

Lastly, in $\eqref{decom}$, we decomposed $\mathscr{L}(S)$ into a disjoint union of equivalence classes $E_i$ such that each $E_i$ lies in a unique translate of $l_1$. As $\mathscr{L}$ is invertible and invertible linear transformations preserve parallel lines, we can write
\begin{align*} S 
& = \mathscr{L}^{-1}(E_1) \cup \dots \cup  \mathscr{L}^{-1}(E_{r_2}) \\
& = (m_{1} \cap S) \cup \dots \cup (m_{r_2} \cap S) \\
& =   (m_{1}  \cup \dots \cup m_{r_2} )\cap S, 
\end{align*}
where $m_{1}, \dots, m_{r_2}$ are parallel lines. Moreover, $m_1$ and $l_1$ are not parallel lines since $m_1$ is parallel to $\mathscr{L}^{-1}(l_1)$ and $\mathscr{L}$ does not have any invariant one-dimensional subspace of $\mathbb{R}^2$. This, together with $\eqref{item4p}$, proves $(\ref{itm4})$ in Theorem $\ref{costr}$.
\end{proof}

As previously mentioned, we note that Theorem $\ref{costr}$ makes partial progress towards an analogue of Conjecture $\ref{BkhSh}$ in $\mathbb{R}^2$. In particular, we set $C = 2(1+ (\det{\mathscr{L}})^{1/2})^2$ and show that if $|A + \mathscr{L}(A)| < C|A|$, then $A$ should be nicely distributed on an almost-rectangular grid formed by vectors parallel to $l_1$ and $m_1$. \\ \\
{\textbf{Funding.}}  This work was supported by a studentship sponsored by a European Research Council Advanced Grant under the European Union's Horizon 2020 research and innovation programme via grant agreement No.~695223. \\ \\
{\textbf{Acknowledgements.}} This work was done partly while the author was a visiting undergraduate at University of Bristol under the supervision of Julia Wolf and partly as a PhD student at University of Bristol under the supervision of Trevor Wooley. The author would like to thank both Julia and Trevor for their guidance and direction. The author would also like to thank the referee for many helpful comments.

\bibliographystyle{amsbracket}
\providecommand{\bysame}{\leavevmode\hbox to3em{\hrulefill}\thinspace}

\end{document}